\newtheorem{lemma}{Lemma}
\newtheorem{proposition}{Proposition}
\newtheorem{theorem}{Theorem}
\newtheorem{remark}{Remark}
\newcommand{\R}{\mathbb{R}}
\newcommand{\N}{\mathbb{N}}
\newcommand{\eps}{\varepsilon}
\renewcommand{\H}{\mathring{H}^{\frac{s}{2}}(\mathbb{R}^{N})}
\newcommand{\HO}{\mathring{H}^{\frac{s}{2}}(\Omega)}
\newcommand{\cC}{{\mathcal C}}
\newcommand{\cM}{{\mathcal M}}
\newcommand{\cP}{{\mathcal P}}
\newcommand{\cS}{{\mathcal S}}
\newcommand{\Ds}{(-\Delta)^{s/2}}
\DeclareMathOperator{\id}{id}
\title{Remainder terms in the fractional Sobolev Inequality}
\author{Shibing Chen \and Rupert L. Frank \and Tobias Weth}
\address{Department of Mathematics, University of Toronto, 40 St. George
  Street, Toronto, Ontario, Canada M5S 2E4}
\email{shibing.chen@utoronto.ca}
\address{Department of Mathematics, Fine Hall, Princeton University,
Princeton, NJ 08544 (USA)}
\email{rlfrank@math.princeton.edu}
\address{Institut f\"ur Mathematik, Goethe-Universit\"at, Robert-Mayer-Str.\ 10, 60054 Frankfurt (Germany)}
\email{weth@math.uni-frankfurt.de}
\date{\today}
\begin{document}

\begin{abstract}
We show that the fractional Sobolev inequality for the embedding $\H
\hookrightarrow L^{\frac{2N}{N-s}}(\R^N)$, $s \in (0,N)$ can be sharpened by 
adding a remainder term proportional to the distance to the set of optimizers. As a corollary, we derive the
existence of a remainder term in the weak $L^{\frac{N}{N-s}}$-norm for
  functions supported in a domain of finite measure. Our results
  generalize earlier work for the non-fractional case 
where $s$ is an even integer.
\end{abstract}

 \maketitle

\section{Introduction}
\label{sec:introduction}

In this note we consider the fractional Sobolev inequality
\begin{equation}
  \label{eq:1}
\|u\|_{s/2}^2 \geq
\cS \left(\int_{\mathbb{R}^{N}}|u|^{q}dx\right)^{\frac{2}{q}}
\qquad \text{for all $u\in \mathring{H}^{\frac{s}{2}}(\mathbb{R}^{N})$,} 
\end{equation}
where $0<s<N$, $q=\frac{2N}{N-s}$, and $\mathring{H}^{\frac{s}{2}}(\mathbb{R}^{N})$ is the space of all
tempered distributions $u$ such that  
$$
\hat u \in L^1_{loc}(\R^N) \qquad \text{and} \qquad  \|u \|_{s/2}^2: = \int_{\mathbb{R}^{N}}|\xi|^s|\hat u|^2 dx < \infty.
$$ 
Here, as usual, $\hat u$ denotes the (distributional)
Fourier transform of $u$. The best Sobolev constant 
\begin{equation}
  \label{eq:21}
\cS=\cS(N,s)= 2^{s} \pi^{\frac{s}{2}} \frac{\Gamma(\frac{N+s}{2})}{
  \Gamma(\frac{N-s}{2})} \Bigl(\frac{\Gamma(\frac{N}{2})}{\Gamma(N)}\Bigr)^{s/N}, 
\end{equation}
i.e., the largest possible constant in (\ref{eq:1}), has been
computed first in the special case $s=2$, $N=3$ by Rosen \cite{Rosen}
and then independently by Aubin \cite{Aubin}
and Talenti \cite{Talenti} for $s=2$ and all dimensions $N$. For general $s \in (0,N)$,
the best constant has been given by Lieb \cite{L} for an
equivalent reformulation of inequality (\ref{eq:1}), the (diagonal) Hardy-Littlewood-Sobolev
inequality. In order to discuss this equivalence in some more detail, we note that
\begin{equation}
  \label{eq:2}
\|u \|_{s/2}^2 = \int_{\mathbb{R}^{N}}u(-\Delta)^{s/2}u dx 
\end{equation}
for every Schwartz
function $u$, where the operator $(-\Delta)^{s/2}$ is defined by  
$$
\widehat {\Ds u}(\xi)=|\xi|^{s} \widehat u (\xi) \qquad \text{for a.e.
  $\xi \in \R^N$.} 
$$
Moreover, $\mathring{H}^{\frac{s}{2}}(\mathbb{R}^{N})$ is also given
as the completion of smooth functions with compact support under the
norm $\|\cdot \|_{s/2}$. The (diagonal) Hardy-Littlewood-Sobolev inequality states that 
\begin{equation}
  \label{eq:4}
\Bigl|\int_{\R^N} \int_{\R^N} \frac{f(x)
  g(y)}{|x-y|^\lambda}\,dx\,dy\Bigr| \le 
\pi^{\lambda/2}
\frac{\Gamma(\frac{N-\lambda}{2})}{\Gamma(N-\frac{\lambda}{2})}
\Bigl(\frac{\Gamma(N)}{\Gamma(N/2)}\Bigr)^{1-\frac{\lambda}{N}}
|f|_p |g|_p   
\end{equation}
for all $f,g \in L^p(\R^N)$, where $0<\lambda<N$ and
$p=\frac{2N}{2N-\lambda}$. Here and in the following, we let
$|\cdot|_r$ denote the usual $L^r$-norm for $1 \le r\le \infty$. The equivalence of (\ref{eq:1}) and
(\ref{eq:4}) follows -- by a duality argument -- from the fact 
that for every $f \in L^{\frac{q}{q-1}}(\R^N)$ there exists a unique
solution $(-\Delta)^{-s/2}f \in \mathring{H}^{\frac{s}{2}}(\mathbb{R}^{N})$ of the
equation $\Ds u = f$ given by convolution with the Riesz potential,
i.e., by 
\begin{equation}
  \label{eq:5}
[(-\Delta)^{-s/2}f](x) = 2^{-s}\pi^{-\frac{N}{2}}
\frac{\Gamma(\frac{N-s}{2})}{\Gamma(s/2)}\int_{\R^N}
\frac{1}{|x-y|^{N-s}}f(y)\,dy 
\qquad \text{for a.e. $x \in
  \R^N$.}
\end{equation}
In \cite{L}, Lieb identified the extremal functions for
(\ref{eq:4}), and his results imply that equality
holds in (\ref{eq:1}) for nontrivial $u$ if and only if $u$ is contained in an
$N+2$-dimensional submanifold $\cM$ of $\H$ given as the set of
functions which, up to
translation, dilation and multiplication by a nonzero constant,  
coincide with    
\begin{equation}
  \label{eq:6}
U \in \H, \qquad U(x)=(1+|x|^2)^{-\frac{N-s}{2}}.
\end{equation}
For the special case $s=2$, i.e., the first order Sobolev inequality,
Brezis and Lieb \cite{BL} asked the question whether a remainder term
-- proportional to the
quadratic distance of the function $u$ to the manifold $\cM$ -- can be
added to the right hand side of (\ref{eq:1}). This question was
answered affirmatively in the case $s=2$ by Bianchi and Egnell
\cite{BE}, and their result was extended later to the case 
$s=4$ in \cite{LW} and to the case of an arbitrary even positive integer $s<N$ in
\cite{BWW}. The main purpose of the present note is to obtain a
corresponding remainder term
inequality for  all (real) values $s \in (0,N)$. Our main result is the following.

\begin{theorem}
\label{maintheorem}
Let 
\begin{equation}
  \label{eq:7}
\cM:= \Bigl\{ c\, U\bigl(\frac{\cdot - x_0}{\eps}\bigr) \,:\, c \in \R
\setminus \{0\}, x_0 \in \R^N,
\eps>0\Bigr \} \: \subset \: \H, 
\end{equation}
where $U$ is defined in (\ref{eq:6}). Then there exists a positive constant $\alpha$ depending only on the dimension $N$ and $s\in(0,N)$ such that
\begin{equation}
  \label{eq:10}
{d}^{2}(u, {\cM})\geq\int_{\mathbb{R}^{N}}u(-\Delta)^{s/2}(u)dx- \cS \left(\int_{\mathbb{R}^{N}}|u|^{q}dx\right)^{\frac{2}{q}}\geq \alpha\, {d}^{2}(u, \cM)
\end{equation}
for all $u\in \H$, where ${d}(u, {\cM})=\min\{\|u-\varphi\|_{s/2}\::\: \varphi\in{\cM} \}.$
 \end{theorem}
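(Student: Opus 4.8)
The upper bound in \eqref{eq:10} is elementary: since $\cM$ consists of optimizers, the functional $u\mapsto \|u\|_{s/2}^2-\cS(\int|u|^q)^{2/q}$ vanishes on $\cM$, so taking $\varphi\in\cM$ to be a point realizing $d(u,\cM)$ and using that $\|\varphi\|_{s/2}^2=\cS(\int|\varphi|^q)^{2/q}$ together with the Sobolev inequality applied to $u$, one expands $\|u\|_{s/2}^2=\|u-\varphi\|_{s/2}^2+2\langle u-\varphi,\varphi\rangle_{s/2}+\|\varphi\|_{s/2}^2$ and checks the orthogonality relation $\langle u-\varphi,\varphi\rangle_{s/2}\ge 0$ coming from the minimality of $\varphi$; this yields $\|u\|_{s/2}^2-\cS(\int|u|^q)^{2/q}\le \|u-\varphi\|_{s/2}^2=d^2(u,\cM)$ after discarding the nonnegative Sobolev deficit of $\varphi$ (or, more carefully, using that $d(u,\cM)\le \|u\|_{s/2}$ combined with the plain inequality). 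So the real content is the lower bound.

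For the lower bound I plan a two-step argument: a local (infinitesimal) analysis near $\cM$ followed by a global compactness/concentration-compactness argument. \emph{Step 1 (local coercivity).} Fix the standard bubble $U$ and consider the quadratic form obtained by linearizing the deficit functional at $U$. Writing $u=U+\varphi$ with $\varphi\perp T_U\cM$ (the tangent space, spanned by $U$, $\partial_{x_i}U$, and the dilation generator $x\cdot\nabla U+\tfrac{N-s}{2}U$), a Taylor expansion gives
\[
\|u\|_{s/2}^2-\cS\Bigl(\int|u|^q\Bigr)^{2/q}
= \|\varphi\|_{s/2}^2-(q-1)\,\cS\,\Bigl(\int U^q\Bigr)^{\frac{2}{q}-1}\int U^{q-2}\varphi^2\,dx+o(\|\varphi\|_{s/2}^2).
\]
The key is that the quadratic form $Q(\varphi)=\|\varphi\|_{s/2}^2-(q-1)\,\cS(\int U^q)^{2/q-1}\int U^{q-2}\varphi^2$ is \emph{positive definite} on the orthogonal complement of $T_U\cM$, with a gap bounded below. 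This follows from the spectral analysis of the operator $(-\Delta)^{s/2}-(q-1)\,\cS(\int U^q)^{2/q-1}\,U^{q-2}$ on $\H$: the Euler--Lagrange equation for $U$ shows $U$ is an eigenfunction, and the dimension $N+2$ of $\cM$ together with the nondegeneracy result (the linearized operator has kernel exactly $T_U\cM$) — which is available in the literature for the fractional Sobolev problem, or can be derived from the conformal covariance of $(-\Delta)^{s/2}$ and the corresponding spectral computation on $S^N$ — pins down that the negative/zero eigenspaces are precisely accounted for by the scaling direction and the kernel. This part is where I expect to lean on known fractional-Sobolev nondegeneracy; I would cite it and then deduce $Q(\varphi)\ge c\|\varphi\|_{s/2}^2$ for $\varphi\perp T_U\cM$.

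\emph{Step 2 (globalization).} Suppose the lower bound fails; then there is a sequence $u_n\in\H$ with deficit $D(u_n)\le \tfrac1n\, d^2(u_n,\cM)$ and $d(u_n,\cM)>0$. By the scaling, translation and dilation invariance of both the deficit and $d(\cdot,\cM)$, and homogeneity, I may normalize $\|u_n\|_{s/2}=1$; then $D(u_n)\to 0$, so by the characterization of equality cases from Lieb's theorem (stated in the excerpt) — or rather by a quantitative concentration-compactness argument applied to the minimizing-type sequence $u_n$ for the Sobolev quotient — after acting by the symmetry group one has $u_n\to U/\|U\|_{s/2}$ strongly in $\H$. Hence $u_n$ eventually lies in the small neighborhood of $\cM$ where Step 1 applies, and writing $u_n=\varphi_n + \psi_n$ with $\psi_n\in\cM$ the nearest point and $\varphi_n\perp T_{\psi_n}\cM$ of size $\|\varphi_n\|_{s/2}=d(u_n,\cM)\to 0$, Step 1 gives $D(u_n)\ge (c/2)\,d^2(u_n,\cM)$ for $n$ large, contradicting $D(u_n)\le \tfrac1n d^2(u_n,\cM)$.

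The main obstacle is Step 1 — specifically, proving the spectral gap for the linearized operator in the fractional range $s\in(0,N)$, where the operator is nonlocal and the usual ODE/Legendre-polynomial arguments that work for even integer $s$ are unavailable. The way around it is to exploit the stereographic-projection conformal equivalence between $(-\Delta)^{s/2}$ on $\R^N$ and the intertwining (GJMS-type) operator on the sphere $S^N$, under which $U$ corresponds to a constant and the eigenfunctions of the linearized operator correspond to spherical harmonics; the eigenvalues are then explicit ratios of Gamma functions and one reads off directly that the only nonpositive modes correspond to the constants ($\ell=0$, the scaling/multiplication direction, which is tangent to $\cM$) and the first spherical harmonics ($\ell=1$, the translation directions), with a strictly positive gap above. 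A secondary technical point is making the concentration-compactness step in the non-Hilbertian $L^q$-setting quantitative enough to conclude strong convergence after symmetrization; this is by now standard (profile decomposition / Lieb's compactness lemma in $\H$) and I would invoke it rather than reprove it.
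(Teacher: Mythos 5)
Your proposal follows essentially the same route as the paper: reduce the spectral-gap question for the linearized operator to the sphere via stereographic projection (where the eigenvalues are the explicit Gamma-ratios $\lambda_k(s)=\Gamma(\tfrac{N+s}{2}+k)/\Gamma(\tfrac{N-s}{2}+k)$ and the nonpositive modes are exactly the degree $0$ and $1$ harmonics spanning the tangent space), then globalize by contradiction using a profile-decomposition/concentration-compactness argument for almost-optimizing sequences. The one soft spot is your sketch of the upper bound: the orthogonality $\langle u-\varphi,\varphi\rangle_{s/2}=0$ gives $\|u\|_{s/2}^2=d^2(u,\cM)+\|\varphi\|_{s/2}^2$, but to conclude you still need $|u|_q\ge|\varphi|_q$, which is not automatic; the paper gets it on the sphere from H\"older's inequality $|1+v|_q^2\ge|\mathbb{S}^N|^{\frac{2-q}{q}}|1+v|_2^2$ combined with the $L^2$-orthogonality $\int_{\mathbb{S}^N}v\,d\xi=0$ of the normal component to the constants.
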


We briefly explain the strategy to prove this remainder term
inequality which goes back to Bianchi and Egnell \cite{BE} in the case
$s=2$. First, the inequality is proved in a small neighborhood of
the optimizer $U \in \cM$ defined in (\ref{eq:6}). Considering a second order Taylor
expansion of the difference functional 
$$
u \mapsto \Phi(u):=\|u\|_{s/2}^2 - \cS \left(\int_{\mathbb{R}^{N}}|u|^{q}dx\right)^{\frac{2}{q}},
$$
at $U$, it is not dificult to see that (\ref{eq:10}) holds in a
neighborhood of $U$ with some $\alpha>0$ if and only if the second
derivative $\Phi''(U)$ is positive definite on the
$(N-2)-$codimensional normal space to the
manifold $\cM$ at $U$. This normal non-degeneracy property is the
crucial step in the argument. Once inequality (\ref{eq:10}) is established in 
a neighborhood of $U$, it extends to a neighborhood of the whole manifold
$\cM$ by as a consequence of the conformal invariance of all terms in
(\ref{eq:7}). We will recall this conformal invariance in detail in Section~\ref{sec:preliminaries} below. Finally, to obtain the global
version of (\ref{eq:10}), a concentration compactness type argument is
applied to show that sequences $(u_n)_n$
in $\H$ with $\Phi(u_n) \to 0$ as $n \to \infty$ satisfy $d(u_n, \cM) \to 0$ as
$n \to \infty$.\\
The general idea described here had already been used in 
\cite{BE,LW,BWW}, but the proofs of the normal non-degeneracy property
in these papers strongly rely on the assumption that $s$ is an even
positve integer
and therefore the eigenvalue problem for $\Phi''(U)$ can be written as
a differential equation. In particular, ODE arguments are used to study the radial part of the
corresponding eigenvalue problem. This method does not apply for
general $s \in (0,N)$. On the other hand, one may observe that the
eigenvalue problem has a much simpler form once inequality
(\ref{eq:10}) is pulled back on the unit sphere $\mathbb{S}^N \subset \R^{N+1}$ via stereographic 
projection. The equivalent version of Theorem~\ref{maintheorem} on 
$\mathbb{S}^N$ is given in Theorem~\ref{maintheorem-reform} below. The
idea of studying (\ref{eq:1}) in its equivalent form on ${\mathbb{S}}^N$ also goes back to Lieb's paper \cite{L}
where the (equivalent) Hardy-Littlewood-Sobolev inequality was
considered. Afterwards it has been applied in many related problems
dealing with Sobolev type inequalities and corresponding Euler-Lagrange
equations, see e.g. \cite{BSW,D,M,Beckner:93} and the references therein. To our
knowledge, its usefulness
to identify remainder terms has not been noted so far.\\
About twenty years after the seminal work of Bianchi and
Egnell \cite{BE}, the topic of remainder terms in first order Sobolev
inequalities (and isoperimetric inequalities) 
 has again attracted a lot of attention in the last years. The recent works
use techniques from symmetrization (see, e.g., \cite{CFMP,FMP}),
optimal transportation (see, e.g., \cite{FiMaPr}), and fast diffusion
(see, e.g., \cite{Do,DoTo,JiXi}); see also \cite{CaFi} for a recent
application of remainder terms. However, while these new methods lead to explicit
constants and allow to treat non-Hilbertian Sobolev norms, the estimates for the remainder terms are typically weaker
than in the result of Bianchi and Egnell. It is not clear
to us to which extent the symmetrization and the optimal
transportation approach can be extended to give remainder terms in the
higher order case or in the case of arbitrary real powers of the
Laplacian (see \cite{JiXi} for a fast diffusion approach in the
fractional case). We therefore think it is remarkable that the
original strategy of Bianchi--Egnell can be generalized to the full
family of conformally invariant Hilbertian Sobolev inequalities.\\
As a corollary of Theorem~\ref{maintheorem}, we also derive a
remainder term inequality for the function space
$\HO$ which -- for a subdomain $\Omega \subset \R^N$ -- is defined as the completion of
$\cC_0^\infty(\Omega)$ with respect to the norm
$\|\cdot\|_{s/2}$. In the case where $\Omega$ has a continuous
boundary, we have 
$$
\HO= \{u \in H^s(\Omega)\::\: \text{$\tilde u \in \H$}\},
$$
where $\tilde u$ denotes the trivial extension of a function $u \in
H^s(\Omega)$ on $\R^N$. We also recall that, for $1 < r < \infty$, the weak $L^r$-norm
of a measurable function $u$ on $\Omega$ is given by 
$$
|u|_{w,r,\Omega}= \sup_{\stackrel{A \subset \Omega}{|A|>0}}|A|^{\frac{1}{r}-1} \int_A |u|\,dx,
$$
see e.g. \cite{hunt}. 

\begin{theorem}
\label{maincorollary}  
Let, as before, $q=\frac{2N}{N-s}$. Then there exists
a constant $C>0$ depending only on $N$ and $s \in (0,N)$ such that for
every domain $\Omega \subset \R^N$ with $|\Omega|<\infty$ and every $u \in \HO$ 
we have
\begin{equation}
  \label{eq:32}
\|u\|_{s/2}^2 -
\cS \left(\int_{\Omega}|u|^{q}dx\right)^{\frac{2}{q}} \ge C |\Omega |^{-\frac{2}{q}}
|u|_{w,q/\! 2,\Omega}^2
\end{equation}
\end{theorem}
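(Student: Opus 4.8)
The plan is to deduce Theorem~\ref{maincorollary} from Theorem~\ref{maintheorem} by combining the lower remainder estimate in (\ref{eq:10}) with an upper bound on the distance $d(u,\cM)$ in terms of the ordinary $L^q$-norm of $u$, which in turn controls the weak $L^{q/2}$-norm. First I would observe that for $u \in \HO$ the trivial extension $\tilde u$ lies in $\H$ and all quantities on the left-hand side of (\ref{eq:32}) agree with those for $\tilde u$ on $\R^N$; so it suffices to prove the inequality on $\H$ for functions supported in a set $\Omega$ of finite measure. Applying the right-hand inequality of (\ref{eq:10}),
\[
\|u\|_{s/2}^2 - \cS\Bigl(\int_{\R^N}|u|^q\Bigr)^{2/q} \ge \alpha\, d^2(u,\cM),
\]
so the problem reduces to showing $d^2(u,\cM) \ge c\,|\Omega|^{-2/q} |u|_{w,q/2,\Omega}^2$ for some constant $c>0$ depending only on $N$ and $s$.

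To estimate $d(u,\cM)$ from below I would pick any $\varphi = c\,U((\cdot-x_0)/\eps) \in \cM$ and split into two regimes according to whether $\varphi$ is "large" or "small" relative to $u$. The key point is that, by the Sobolev embedding $\H \hookrightarrow L^q(\R^N)$ with constant $\cS^{-1/2}$, one has $|u-\varphi|_q \le \cS^{-1/2}\|u-\varphi\|_{s/2}$, hence $d(u,\cM) \ge \cS^{1/2}\operatorname{dist}_{L^q}(u,\cM)$, where the distance on the right is in $L^q(\R^N)$. So it suffices to bound the $L^q$-distance of $u$ to $\cM$ from below. Since $u$ is supported in $\Omega$, for any $\varphi \in \cM$ we have
\[
|u-\varphi|_q^q \ge \int_{\R^N \setminus \Omega}|\varphi|^q\,dx = |\varphi|_q^q - \int_\Omega |\varphi|^q\,dx.
\]
If $|\varphi|_q$ is at least twice the optimal value $\inf_{\psi\in\cM}|u-\psi|_q + |u|_q$ — i.e.\ if $\varphi$ is far from being optimal — this already gives a lower bound; so we may restrict attention to $\varphi \in \cM$ with $|\varphi|_q \le 2|u|_q + 2\operatorname{dist}_{L^q}(u,\cM) \le 4|u|_q$. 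For such $\varphi$, using that the profile $U$ has a universal mass distribution, $\int_\Omega |\varphi|^q$ is small unless the bulk of $\varphi$ sits inside $\Omega$, which forces a lower bound on $\eps^N$ in terms of $|\Omega|$; combined with the explicit form of $U$ this yields $|\varphi(x)| \le C |\Omega|^{-(N-s)/(2N)} |c| \eps^{-N/q}\cdots$, i.e.\ a pointwise $L^\infty$ bound $\|\varphi\|_\infty \le C|\Omega|^{-1/q}\cdot\bigl(\inf_\psi|u-\psi|_q + |u|_q\bigr)$ — the crucial gain being the factor $|\Omega|^{-1/q}$. The honest version of this step is the main obstacle: one must show that any near-optimal $\varphi \in \cM$ satisfies $\|\varphi\|_\infty \le C |\Omega|^{-1/q}\, d(u,\cM)$, perhaps after first disposing of the case where $d(u,\cM) \gtrsim |u|_q$ (where there is nothing to prove since then $|u|_q \le C\,d(u,\cM)$ and one can compare directly with $\varphi = 0$ being essentially optimal... though $0 \notin \cM$, one takes $\varphi \in \cM$ with small norm).

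Granting the bound $\|\varphi\|_\infty \le C|\Omega|^{-1/q} d(u,\cM)$ for a near-optimal $\varphi$, I would finish as follows. For any measurable $A \subset \Omega$ with $|A|>0$,
\[
|A|^{2/q - 2}\Bigl(\int_A |u|\,dx\Bigr)^2 \le |A|^{2/q-2}\Bigl(\int_A |u-\varphi|\,dx + \int_A |\varphi|\,dx\Bigr)^2,
\]
and by Hölder, $\int_A |u-\varphi|\,dx \le |A|^{1-1/q}|u-\varphi|_q$, while $\int_A |\varphi|\,dx \le |A|\,\|\varphi\|_\infty \le |A|\, C|\Omega|^{-1/q} d(u,\cM)$. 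Since $|A| \le |\Omega|$, one checks $|A|^{2/q-2}\cdot|A|^2 \|\varphi\|_\infty^2 \le |\Omega|^{2/q} C^2 |\Omega|^{-2/q} d^2(u,\cM) = C^2 d^2(u,\cM)$, while the cross and first terms are bounded by $C' d^2(u,\cM)$ using $|u-\varphi|_q = d_{L^q}(u,\cM) \le \cS^{-1/2} d(u,\cM)$. Taking the supremum over $A$ gives $|u|_{w,q/2,\Omega}^2 \le C'' d^2(u,\cM)$ with $C''$ depending only on $N$ and $s$, and then rescaling the inequality to isolate the claimed power of $|\Omega|$ — or rather, noting that the previous display already has the correct $|\Omega|$-homogeneity — yields $|\Omega|^{-2/q}|u|_{w,q/2,\Omega}^2 \le C'' d^2(u,\cM)$ after replacing $\|\varphi\|_\infty$ by its $|\Omega|^{-1/q}$-scaled bound. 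Combining with the lower remainder estimate $d^2(u,\cM) \le \alpha^{-1}\bigl(\|u\|_{s/2}^2 - \cS|u|_q^2\bigr)$ from (\ref{eq:10}) gives (\ref{eq:32}) with $C = \alpha/C''$. The scaling bookkeeping and the $\|\varphi\|_\infty$ estimate are the only nontrivial points; everything else is Hölder and the triangle inequality.
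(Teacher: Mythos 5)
Your overall strategy is the same as the paper's: reduce to $\HO$-functions, invoke the right-hand inequality of \eqref{eq:10}, and then lower-bound $d(u,\cM)$ by $|\Omega|^{-1/q}|u|_{w,q/2,\Omega}$ via the triangle inequality applied to a near-optimal $\varphi\in\cM$, using that $u$ vanishes off $\Omega$. However, the pivotal estimate you propose, namely that a near-optimal $\varphi=cU\bigl(\tfrac{\cdot-x_0}{\eps}\bigr)$ satisfies $\|\varphi\|_\infty\le C|\Omega|^{-1/q}d(u,\cM)$, is false, and in fact points in the wrong direction. Indeed $\|\varphi\|_\infty=|c|$ while $|\varphi|_q=|c|\,\eps^{N/q}|U|_q$, so if a bubble of $L^q$-norm comparable to $|u|_q$ is to have most of its mass inside $\Omega$ then $\eps^N\lesssim|\Omega|$ and hence $|c|\gtrsim|u|_q\,|\Omega|^{-1/q}$ --- a \emph{lower} bound on $\|\varphi\|_\infty$, not an upper one. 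Concretely, take $|\Omega|=1$ and let $u$ be (a normalization of) a sharply concentrated bubble truncated to $\Omega$: then $d(u,\cM)\to0$ while $\|\varphi\|_\infty\to\infty$ along the concentrating family, so no inequality of the form $\|\varphi\|_\infty\lesssim d(u,\cM)$ can hold. Your heuristic ``forces a lower bound on $\eps^N$'' has the inequality reversed.

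The quantity that actually has the right monotonicity is the weak $L^{q/2}$-norm of the bubble itself: with the paper's parametrization $U_{\lambda,y}(x)=\lambda U(\lambda^{2/(N-s)}(x-y))$ one has $|U_{\lambda,y}|_{w,\R^N}=|U|_{w,\R^N}/\lambda$, i.e.\ the weak norm \emph{decreases} under concentration, whereas $\|U_{\lambda,y}\|_\infty=\lambda$ increases. The paper's key step is a tail estimate: using that $u$ is supported in $\Omega$ together with the bathtub principle, one shows $\|U_{\lambda,y}\|_{L^q(\R^N\setminus\Omega)}\gtrsim\lambda^{-1}$ for the near-optimal $\lambda$ (after fixing $|\Omega|=1$ by scaling), whence $d(u,\cM)\ge C_1/\lambda$ and therefore $|U_{\lambda,y}|_{w,\R^N}\le |U|_{w,\R^N}\,d(u,\cM)/C_1$. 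Plugging this into the weak-norm triangle inequality $|u|_{w,\Omega}\le|cU_{\lambda,y}|_{w,\Omega}+|u-cU_{\lambda,y}|_{w,\Omega}$ (the second term being handled by H\"older and Sobolev as in your \eqref{eq:34}-type bound) closes the argument. So you should replace the $L^\infty$ control of $\varphi$ by the weak-$L^{q/2}$ control of $\varphi$ and supply the missing ingredient, the tail bound $d(u,\cM)\gtrsim 1/\lambda$; also note that the infimum defining $d(u,\cM)$ is only guaranteed to be attained after first disposing, as the paper does, of the easy regime $d(u,\cM)\ge\rho\|u\|_{s/2}$.
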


For fixed bounded domains $\Omega \subset \R^N$, the existence of a weak
$L^{q/2}$-remainder term is due to Brezis and Lieb \cite{BL} in the case $s=2$ and
to Gazzola and Grunau \cite{GG} in the case of an arbitrary even positive
integer $s<N$. Bianchi and Egnell
\cite{BE} gave an alternative proof in the case $s=2$ using the corresponding special
case of inequality (\ref{eq:10}). We will
follow similar ideas in our proof of
Theorem~\ref{maincorollary}, using Theorem~\ref{maintheorem} in full
generality. We note that some additional care is needed to get a remainder term which
only depends on $|\Omega|$ and not on $\Omega$ itself.  

The paper is organized as follows. In Section~\ref{sec:preliminaries}
we recall the conformal invariance of the problem, and we discuss the framework
for an equivalent version of Theorem~\ref{maintheorem} on the sphere
$\mathbb{S}^N \subset \R^{N+1}$, see Theorem~\ref{maintheorem-reform}. In
Section~\ref{sec:proof-remainder-term} we prove this Theorem, thus
completing the proof of Theorem~\ref{maintheorem}. In
Section~\ref{sec:weak-lq2-remainder} we give the proof of
Theorem~\ref{maincorollary}.   
   
We conclude by pointing out the open problem to find an explicit constant
$\alpha>0$ in (\ref{eq:10}) via a constructive proof of Theorem~\ref{maintheorem}. For a local version of
Theorem~\ref{maintheorem} where the right hand side of (\ref{eq:10})
is replaced by $\alpha d^2(u,\cM) + o(d^2(u,\cM))$ and only $u \in \H$
with $d(u,\cM)<\|u\|_{s/2}$ is considered, the best constant is $\alpha=
\frac{2s}{N+s+2}$. This follows from 
Proposition~\ref{sec:proof-remainder-term-1} below.

\section{Preliminaries}
\label{sec:preliminaries}

In the following, we will denote the scalar product in $\H$ by 
$$
\langle u,v
\rangle_{s/2} = \int_{\R^N} |\xi|^s \hat u(\xi) \overline{\hat v(\xi)}\,d\xi,
$$
so that $\|u\|_{s/2}^2 = \langle u,u \rangle_{s/2}$ for $u \in \H$. In
the remainder of this section, $0<s<N$ is fixed and we abbreviate $q=
2n/(N-s)$. We recall that the group of conformal transformations on
$\R^N$ is  generated by translations, rotations, dilations and the inversion $x \mapsto
\frac{x}{|x|^2}$. If $h$ is one of these transformations with Jacobian determinant $J_h$, then 
for any functions $u,v \in \mathring{H}^{\frac{s}{2}}(\mathbb{R}^{N})$
we have $J_h^{\frac{1}{q}} u \circ h, J_h^{\frac{1}{q}} v \circ h
\in \mathring{H}^{\frac{s}{2}}(\mathbb{R}^{N})$ and
\begin{equation}
  \label{eq:8}
\langle  J_h^{\frac{1}{q}} u \circ h , J_h^{\frac{1}{q}} v \circ
h \rangle_{s/2}=\langle u,v \rangle_{s/2}.
\end{equation}
This property is a consequence of the conformal covariance of
the operator $\Ds$, i.e. of the equality 
\begin{equation}
  \label{eq:11}
\Ds (J_h^{\frac{1}{q}} u \circ h) = J_h^{\frac{N+s}{2N}} [\Ds u]
\circ h    
\end{equation}
for all conformal transformations $h$ on $\R^N$ and all Schwartz
functions $u$. As stated in \cite[Proposition 2.1]{M}, (\ref{eq:11})
is most easily derived by considering the inverse operator
$(-\Delta)^{-s/2}$ given in (\ref{eq:5}). Indeed, the identity 
\begin{equation}
\label{eq:12}
(-\Delta)^{-s/2} (J_h^{\frac{N+s}{2N}} u \circ h) =
J_h^{\frac{1}{q}} [(-\Delta)^{-s/2} u] \circ h    
\end{equation}
is equivalent to (\ref{eq:11}), and it can be verified case by
case for dilations, rotations,
translations and the inversion. In the latter form related to the Riesz potential, the
conformal covariance had already been used by Lieb in \cite{L}.

Note that, if $h$ is a conformal transformation on $\R^n$, it follows
from (\ref{eq:8}) that the map 
$u \mapsto J_h^{\frac{1}{q}} u \circ h$ preserves distances with respect to the
norm $\|\cdot\|_{s/2}$, i.e. we have 
\begin{equation}
\label{eq:9}
\|J_h^{\frac{1}{q}} u \circ h - J_h^{\frac{1}{q}} v \circ h\|_{s/2}=\| u
-v \|_{s/2} \qquad \text{for all $u,v \in \H$.}
\end{equation}
Since the set $\cM$ is also invariant under the transformations $u
\mapsto J_h^{\frac{1}{q}} u \circ h$, we conclude that 
$d(J_h^{\frac{1}{q}} u \circ h, \cM)=d(u,\cM)$ for all $u \in \H$. We
also note that 
\begin{equation}
  \label{eq:3}
|J_h^{\frac{1}{q}} u \circ h|_{q} = |u|_{q} \qquad \text{for any $u
  \in L^q(\R^N)$}
\end{equation}
and any conformal transformation $h$ on $\R^N$, which follows by an
easy computation. In the following, we consider the inverse
stereographic projection  
$$
\pi: \R^N \to \mathbb{S}^{N} \subset \R^{N+1}, \qquad
\pi(x)=(\frac{2x}{1+|x|^2},\frac{1-|x|^2}{1+|x|^2}).
$$
We recall that $\pi$ is a conformal diffeomorphism. 
More precisely, if $g_{\R^N}$ denotes the flat euclidian metric 
on $\R^N$ and $g_{\mathbb{S}^{N}}$ denotes the metric induced by the
embedding $\mathbb{S}^{N}  \subset \R^{N+1}$, then the pullback of $g_{\mathbb{S}^{N}}$ to $\R^N$ satisfies
\begin{equation}
\label{conformfactor}
{\pi}^*g_{\mathbb{S}^{N}}=\frac{4}{(1+|\cdot|^2)^2}g_{\R^N}.
\end{equation}
Moreover, the corresponding volume element is given by 
\begin{equation}
  \label{eq:13}
J_\pi(x) dx = \Bigl(\frac{2}{1+|x|^2}\Bigr)^N dx,
\end{equation}
For a function $v:{\mathbb{S}}^N \to \R$, we may now define 
$$
\cP v: \R^N \to \R, \qquad [\cP v](x)= J_\pi(x)^{\frac{1}{q}} v(\pi(x)) = 
\Bigl(\frac{2}{1+|x|^2}\Bigr)^{\frac{N-s}{2}} v(\pi(x)).
$$
From (\ref{eq:13}), it is easy to see that $\cP$ defines an isometric
isomorphism between $L^q(\mathbb{S}^N)$ and $L^q(\R^N)$. We also note that 
\begin{equation}
  \label{eq:15}
\cP\, 1 = 2^{(N-s)/2} U,  
\end{equation}
where $1$ stands for unit function on $\mathbb{S}^N$ and $U$ is defined in (\ref{eq:6}). Moreover, $H^{\frac{s}{2}}(\mathbb{S}^{N})$ is the completion  
of the space of smooth functions on ${\mathbb{S}}^N$ 
under the norm $\|\cdot \|_{*}$ induced by scalar product
$$
(u,v) \mapsto \langle u,v \rangle_*= \langle \cP u, \cP v \rangle_{s/2}.
$$
We will always consider $H^{\frac{s}{2}}(\mathbb{S}^{N})$ with the
norm $\|\cdot\|_*$ induced by this scalar product (for
matters of convenience, we suppress the dependence on
$s$ at this point). Hence, by construction, 
$$
\text{$\cP$ is also an isometric isomorphism
  $(H^{\frac{s}{2}}(\mathbb{S}^{N}),\|\cdot\|_*)  \to
  (\H,\|\cdot\|_{s/2})$.}
$$
Next we note that $\langle \cdot,\cdot \rangle_*$ is the quadratic
form of a unique positive self adjoint operator in $L^2(\mathbb{S}^N)$ which is commonly denoted by $A_s$ in the literature. This operator is formally given by 
$$
[A_s w] \circ \pi = J_\pi^{-\frac{N+s}{2N}} \Ds (\cP w).
$$
A key ingredient of the proof of Theorem~\ref{maintheorem} is the
following representation of $A_s$ as a function of the
Laplace-Beltrami Operator $\Delta_{{\mathbb{S}}^N}$
on ${\mathbb{S}}^N$: 
\begin{equation}
  \label{eq:14}
A_s= \frac{\Gamma(B+\frac{1+s}{2})}{\Gamma(B+\frac{1-s}{2})} \qquad
\text{with $B= \sqrt{-\Delta_{{\mathbb{S}}^N} + \bigl(\frac{N-1}{2}\bigr)^2}$.}
\end{equation}
This formula is most easily derived by considering the inverse of $A_s$ and using
the Funk-Hecke formula, see \cite{Beckner:93} and also \cite{M}. It
also shows that the domain of $A_s$
coincides with $H^s({\mathbb{S}}^N)$. The following statement
is a 
mere reformulation of (\ref{eq:14}). 

\begin{proposition}
\label{lemma1}
The operator $A_{s}$ is self adjoint and has compact resolvent. Its
spectrum is given as the sequence of eigenvalues 
$$
\lambda_k(s)= \frac{\Gamma(\frac{N+s}{2}+k)}{\Gamma(\frac{N-s}{2}+k)},
\qquad k \in \N_0,
$$
and the eigenspace corresponding to the eigenvalue $\lambda_k(s)$ is
spanned by the spherical harmonics $Y_{k,j},\, j= 1,\dots, {k+N
  \choose N} - {k+N-2
  \choose N}$, of degree $k$.
\end{proposition}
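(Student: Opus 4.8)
The plan is to read off all the assertions from the representation formula (\ref{eq:14}) together with the classical spectral theory of the Laplace--Beltrami operator on the sphere. Recall that $-\Delta_{\mathbb{S}^N}$ is a nonnegative self-adjoint operator in $L^2(\mathbb{S}^N)$ with purely discrete spectrum: its eigenvalues are $\mu_k = k(k+N-1)$, $k \in \N_0$, the corresponding eigenspace $\cH_k$ is the space of spherical harmonics of degree $k$ (restrictions to $\mathbb{S}^N$ of harmonic homogeneous polynomials of degree $k$ on $\R^{N+1}$), it has dimension ${k+N \choose N} - {k+N-2 \choose N}$, and $\bigoplus_{k} \cH_k$ is dense in $L^2(\mathbb{S}^N)$. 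Hence $B = \sqrt{-\Delta_{\mathbb{S}^N}+ (\frac{N-1}{2})^2}$ is self-adjoint with the same eigenspaces, and using the identity $k(k+N-1) + \left(\frac{N-1}{2}\right)^2 = \left(k+\frac{N-1}{2}\right)^2$ one sees that $B$ acts on $\cH_k$ as multiplication by the scalar $k + \frac{N-1}{2}$; in particular $\sigma(B) \subset [\frac{N-1}{2},\infty)$.

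Next I would make sense of (\ref{eq:14}) via the spectral theorem. The function $f(t) = \Gamma(t + \frac{1+s}{2})/\Gamma(t + \frac{1-s}{2})$ is real, positive and continuous (indeed real-analytic) on $[\frac{N-1}{2},\infty)$: for such $t$ both arguments of $\Gamma$ are bounded below by $\frac{N-s}{2} > 0$ (here $s<N$ is used), so no pole of $\Gamma$ is met and the denominator does not vanish. Therefore $A_s := f(B)$ is a well-defined positive self-adjoint operator in $L^2(\mathbb{S}^N)$, it leaves each finite-dimensional space $\cH_k$ invariant, and it acts there as multiplication by
$$
f\!\left(k + \tfrac{N-1}{2}\right) = \frac{\Gamma\!\left(k + \tfrac{N-1}{2} + \tfrac{1+s}{2}\right)}{\Gamma\!\left(k + \tfrac{N-1}{2} + \tfrac{1-s}{2}\right)} = \frac{\Gamma\!\left(k + \tfrac{N+s}{2}\right)}{\Gamma\!\left(k + \tfrac{N-s}{2}\right)} = \lambda_k(s).
$$
Since $\bigoplus_k \cH_k$ is dense, the orthogonal decomposition $L^2(\mathbb{S}^N) = \bigoplus_k \cH_k$ diagonalizes $A_s$, so $\sigma(A_s)$ is the closure of $\{\lambda_k(s) : k \in \N_0\}$, every $\lambda_k(s)$ is an eigenvalue with eigenspace precisely $\cH_k = \operatorname{span}\{Y_{k,j}\}$, and there are no further spectral values once we know the $\lambda_k(s)$ do not accumulate at a finite point — which is the content of the last step.

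It remains to verify that $A_s$ has compact resolvent. Since each $\lambda_k(s)$ has finite multiplicity, it suffices to show $\lambda_k(s) \to \infty$ as $k \to \infty$. This is immediate from the standard asymptotics $\Gamma(z+a)/\Gamma(z+b) = z^{a-b}(1+o(1))$ as $z \to \infty$, which yields $\lambda_k(s) \sim k^{s} \to \infty$ because $s > 0$; alternatively, $\lambda_{k+1}(s)/\lambda_k(s) = (k + \frac{N+s}{2})/(k + \frac{N-s}{2}) > 1$ for all $k$, so the sequence is strictly increasing, and $\sum_k \log\frac{k+\frac{N+s}{2}}{k+\frac{N-s}{2}} = +\infty$ shows it is unbounded. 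Consequently $(A_s + 1)^{-1}$ is a bounded self-adjoint operator whose nonzero eigenvalues $(\lambda_k(s)+1)^{-1}$ have finite multiplicity and accumulate only at $0$, hence it is compact. I do not expect any genuine obstacle here; the only points deserving a line of care are that the argument of $\Gamma$ in the denominator stays positive, so that $f$ is regular on $\sigma(B)$, and the Gamma-quotient asymptotics used for the divergence $\lambda_k(s) \to \infty$.
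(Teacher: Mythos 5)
Your proof is correct and follows exactly the route the paper intends: the paper states that Proposition~\ref{lemma1} is ``a mere reformulation of (\ref{eq:14})'', and your argument is precisely the expected unpacking of that remark via the spectral decomposition of $-\Delta_{\mathbb{S}^N}$ into spherical harmonics, the identity $k(k+N-1)+\bigl(\tfrac{N-1}{2}\bigr)^2=\bigl(k+\tfrac{N-1}{2}\bigr)^2$, functional calculus for $B$, and the observation that $\lambda_k(s)\to\infty$ (hence compact resolvent).
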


Next, we note that, via the isometric isomorphism $\cP$, inequality
(\ref{eq:1}) is equivalent to 
\begin{equation}
  \label{eq:16}
\|u\|_*^2 \geq
\cS |u|_q^2 \qquad \text{for all $u\in H^{\frac{s}{2}}(\mathbb{S}^{N})$,} 
\end{equation}
with $q=\frac{2N}{N-s}$. Here, in accordance with the previous notation, we also write $|\cdot|_r$ for the $L^r$-norm of
a function in $L^r(\mathbb{S}^N)$, $1 \le r \le \infty$. Equality is
attained in (\ref{eq:16}) for nontrivial $u$ if and only if $u \in
\cM_*$, where 
$$
\cM_*:= \cP^{-1}(\cM)= \{v \in H^{\frac{s}{2}}(\mathbb{S}^{N})\::\:
\cP v \in \cM\}. 
$$
Moreover, the remainder term inequality (\ref{eq:10}) is equivalent to 
\begin{equation}
\label{eq:17}
{d}^{2}(u, {\cM_*})\geq \|u\|_*^2-
\cS|u|_q^{2}\geq
\alpha\, {d}^{2}(u, \cM_*)\quad \text{for $u\in {H}^{s/2}(\mathbb{S}^{N})$},
\end{equation}
where ${d}(u, {\cM}_*)=\min\{\|u-\varphi\|_{*}\::\:\varphi\in{\cM} \}.$ 
We may therefore reformulate Theorem~\ref{maintheorem} as follows.
\begin{theorem}
\label{maintheorem-reform}
There exists a positive constant $\alpha$ depending only on the dimension $N$ and $s\in(0,N)$ such that
(\ref{eq:17}) holds.
 \end{theorem}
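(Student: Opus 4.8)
The plan is to follow the Bianchi--Egnell strategy, now carried out on $\mathbb{S}^N$ where Proposition~\ref{lemma1} gives us exact spectral data. The argument splits into three parts: a local estimate near the optimizer, propagation to a neighborhood of the whole manifold $\cM_*$ via conformal invariance, and a global compactness argument.

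\textbf{Local estimate near the optimizer.} First I would work near the constant function $1 \in \cM_*$ (which corresponds to $U$ up to the constant $2^{(N-s)/2}$ by \eqref{eq:15}). The tangent space $T_1\cM_*$ is spanned by $1$ together with the first spherical harmonics $Y_{1,j}$, $j=1,\dots,N+1$, since dilations and translations of $U$ infinitesimally produce degree-$1$ harmonics under $\cP^{-1}$. Write $u = t(1+w)$ with $w \perp T_1\cM_*$ in $H^{s/2}(\mathbb{S}^N)$ and $\|w\|_*$ small; a second-order Taylor expansion of $\Phi_*(u) = \|u\|_*^2 - \cS|u|_q^2$ at $1$ gives
\begin{equation*}
\Phi_*(1+w) = \langle Q w, w\rangle_* + o(\|w\|_*^2),
\end{equation*}
where $Q$ is the operator associated with $\Phi_*''(1)$. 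The key point is to show $\langle Qw,w\rangle_* \ge \alpha_0\|w\|_*^2$ for $w$ orthogonal to $T_1\cM_*$, with $\alpha_0>0$. Since $1$ solves the Euler--Lagrange equation $A_s 1 = \cS \lambda_0(s) \cdot 1$ (after normalizing), the quadratic form of $\Phi_*''$ at $1$ is, up to a positive factor and using $|1|_q^{q} = |\mathbb{S}^N|$, essentially $\langle u,u\rangle_* - (q-1)\frac{\cS}{\lambda_0(s)}\langle A_s u, u\rangle_{L^2}$ plus a rank-one correction coming from the power nonlinearity. Expanding $w = \sum_{k\ge 0}\sum_j c_{k,j} Y_{k,j}$ and using $\lambda_k(s)/\lambda_0(s) = \prod_{i=0}^{k-1}\frac{N+s+2i}{N-s+2i}$, the eigenvalue of $Q$ on degree-$k$ harmonics is a positive multiple of $1 - (q-1)\lambda_0(s)/\lambda_k(s)$; with $q-1 = \frac{N+s}{N-s}$ one checks this is $\le 0$ for $k=0,1$ and strictly positive for all $k\ge 2$, the $k=2$ value being the smallest. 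This yields $\alpha_0 = \frac{2s}{N+s+2}$ (consistent with the constant announced after the theorem), and establishes \eqref{eq:17} locally near $1$.

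\textbf{Propagation and global step.} Because all three terms in \eqref{eq:17} are invariant under $u \mapsto \cP^{-1}(J_h^{1/q}(\cP u)\circ h)$ by \eqref{eq:8}, \eqref{eq:3}, \eqref{eq:9}, the local inequality near $1$ immediately gives the same inequality near every point of $\cM_*$, with the same $\alpha$. For the global statement I would argue by contradiction: if no $\alpha>0$ works, there is a sequence $(u_n)$ with $\|u_n\|_* = 1$, $d(u_n,\cM_*) \ge \delta_0 > 0$, and $\Phi_*(u_n)\to 0$. Then $|u_n|_q^2 \to 1/\cS$, so $(u_n)$ is a minimizing sequence for \eqref{eq:16}; by the concentration-compactness principle (or equivalently by pulling back to $\R^N$ and invoking the classification of optimizing sequences for the Sobolev/HLS inequality, cf. Lieb \cite{L}), after applying suitable conformal transformations $h_n$, the functions $\tilde u_n := \cP^{-1}(J_{h_n}^{1/q}(\cP u_n)\circ h_n)$ converge strongly in $H^{s/2}(\mathbb{S}^N)$ to an element of $\cM_*$. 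Since $d(\tilde u_n, \cM_*) = d(u_n,\cM_*) \ge \delta_0$, this is a contradiction; hence $d(u_n,\cM_*)\to 0$ whenever $\Phi_*(u_n)\to 0$, and combined with the neighborhood estimate this gives \eqref{eq:17} globally. The left-hand inequality in \eqref{eq:17} is elementary: taking $\varphi\in\cM_*$ realizing the distance and using that the Sobolev quotient is $\ge \cS$ gives $\|u\|_*^2 - \cS|u|_q^2 \le \|u\|_*^2 - \|\varphi\|_*^2 \cdot (\text{ratio}) \le d^2(u,\cM_*)$ after a short computation using orthogonality of $u-\varphi$ to $T_\varphi\cM_*$ at the minimum.

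\textbf{Main obstacle.} The genuinely new difficulty compared with \cite{BE,LW,BWW} is the spectral analysis of $\Phi_*''(1)$ for non-integer $s$: in those papers $A_s$ is a local differential operator and one studies the radial ODE, which is unavailable here. The resolution is precisely the passage to $\mathbb{S}^N$ together with formula \eqref{eq:14}, which diagonalizes $A_s$ on spherical harmonics with the explicit eigenvalues $\lambda_k(s)$; the positivity of $1 - (q-1)\lambda_0(s)/\lambda_k(s)$ for $k\ge 2$ then reduces to the elementary monotonicity of $k\mapsto \lambda_k(s)/\lambda_0(s)$, a ratio of Gamma functions. A secondary technical point requiring care is verifying that the second-order remainder in the Taylor expansion of $u\mapsto |u|_q^2$ is genuinely $o(\|w\|_*^2)$ on $H^{s/2}(\mathbb{S}^N)$; this needs the compactness of the Sobolev embedding $H^{s/2}(\mathbb{S}^N)\hookrightarrow L^r(\mathbb{S}^N)$ for $r<q$ together with uniform integrability, which holds since $\mathbb{S}^N$ is compact — one more reason the spherical picture is advantageous.
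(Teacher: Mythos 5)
Your proposal is correct and follows essentially the same route as the paper: a second-order Taylor expansion of $\Psi(u)=\|u\|_*^2-\cS|u|_q^2$ at the constant function $1$, positivity of $\Psi''(1)$ on $T_1\cM_*^\perp$ via the spectral gap $\lambda_2(s)>\lambda_1(s)=(q-1)\cS|\mathbb{S}^N|^{(2-q)/q}$ (yielding the same constant $\frac{2s}{N+s+2}$), conformal invariance to reduce to this base point, and a compactness-by-contradiction argument for the global statement. The only cosmetic differences are that the paper uses the variational characterization of $\lambda_2(s)$ rather than a full mode-by-mode diagonalization, and implements the compactness step via G\'erard's profile decomposition rather than a generic appeal to concentration-compactness (note that Lieb's paper classifies optimizers, not optimizing sequences, so the precise tool you would need is Lions' or G\'erard's result).
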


We will prove Theorem~\ref{maintheorem-reform} in
Section~\ref{sec:proof-remainder-term} below, thus completing the
proof of Theorem~\ref{maintheorem}. We close this section with some
comments on the conformal invariance of the reformulated problem and
the geometry of $\cM_*$. Via
stereographic projection, the conformal transformations on
${\mathbb{S}}^N$ are in 1-1-correspondance with the conformal
transformations on $\R^N$. So, if $\tau$ is an element of the
conformal group of ${\mathbb{S}}^{N}$ with Jacobian determinant $J_\tau$, then (\ref{eq:3}) and
(\ref{eq:8}) imply that 
\begin{equation}
  \label{eq:3_*}
\langle J_\tau^{\frac{1}{q}} u \circ \tau, J_\tau^{\frac{1}{q}} v
\circ \tau \rangle_{s/2} = \langle u,v \rangle_* \qquad \text{and}\qquad  
|J_h^{\frac{1}{q}} u \circ h|_{q} = |u|_{q}
\end{equation}
for all $u,v \in H^{\frac{s}{2}}(\mathbb{S}^{N})$. From
(\ref{eq:15}), we deduce the representation 
$$
\cM_*=\{cJ_{\tau}^{\frac{1}{q}}|\ \tau\ \text{is an element of the
  conformal group of}\ S^{N}, c \in \mathbb{R} \setminus \{0\} \}.
$$ 
Since the Jacobian determinant $J_\tau$ of a conformal transformation
$\tau$ on ${\mathbb{S}}^N$ has the form $J_{\tau}(\xi)=(1-\xi\cdot\theta)^{-n}$
for some $\theta\in B^{n+1}:=\{x \in \R^{N+1}\::\: |x|<1\}$, $\cM_*$ can be viewed as an $N+2$ dimensional smooth manifold embedded in $H^{\frac{s}{2}}(\mathbb{S}^{N})$ via the mapping
\begin{eqnarray}
\R \setminus \{0\} \times B^{N+1}\,  \to\, H^{\frac{s}{2}}(\mathbb{S}^{N}),\qquad (c, \theta)\,\mapsto\, u_{c,\theta},
\end{eqnarray}
where $u_{c,\theta}(\xi)=c(1-\xi\cdot\theta)^{-\frac{N-s}{2}}$ for
$\xi \in {\mathbb{S}}^N$.
This immediately implies that the tangent space $T_{1}\cM_*$ at the
function $1=u_{1,0}$ is
generated by the spherical harmonics $Y_0^0=1$ and $Y_1^j$,
$j=1,\dots,N+1$, given by 
$$
Y_1^j(\xi)= \xi_j \qquad \text{for $\xi=(\xi_1,\dots,\xi_{N+1}) \in
  \mathbb{S}^N \subset \R^{N+1}$.}
$$
Hence $T_{1}\cM_*$ coincides precisely
with the generalized eigenspace of the operator $A_s$ corresponding to
the eigenvalues $\lambda_0(s)$ and $\lambda_1(s)$. Combining this fact
with the minimax characterization of the eigenvalue $\lambda_2(s)$, we readily deduce that  
\begin{equation}
\label{eq:30}
\lambda_2(s)=\mathop{\text{inf}}_{v\in
  T_{1}\cM_*^\perp}\frac{\|v\|^2}{|v|_2^2}
\end{equation}
with 
\begin{equation}
  \label{eq:31}
T_{1}\cM_*^\perp:= \{v \in H^{\frac{s}{2}}(\mathbb{S}^{N})\::\:
\langle v, w \rangle_* =0 \; \text{for all $w \in T_{1}\cM_*$}\}.  
\end{equation}
The identity (\ref{eq:30}) will be of crucial importance for the local
verification of (\ref{eq:17}) close to the manifold $\cM_*$. 

\section{Proof of the remainder term inequality on the sphere}
\label{sec:proof-remainder-term}

We first prove a local variant of Theorem~\ref{maintheorem-reform}. 

\begin{proposition}
\label{sec:proof-remainder-term-1}
For all $u\in H^{\frac{s}{2}}(\mathbb{S}^{N})$ with
$d(u,\cM_*)<\|u\|_*$, we have 
\begin{equation}
  \label{eq:29}
d^2(u,\cM_*) \ge \|u\|_*^2 -\cS |u|_q^2
\geq \frac{2s}{N+s+2} d^{2}(u,\cM_*)+o(d^{2}(u,\cM_*)).
\end{equation}
\end{proposition}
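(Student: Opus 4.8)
The plan is to prove the two inequalities in (\ref{eq:29}) separately, with the hypothesis $d(u,\cM_*)<\|u\|_*$ entering only in the second one, and to reduce everything to a Taylor expansion of $\Phi(u):=\|u\|_*^2-\cS|u|_q^2$ around the constant function on $\mathbb{S}^N$, where Proposition~\ref{lemma1} and (\ref{eq:30}) make the relevant second variation explicit. For the upper bound I would first note that $\overline{\cM_*}=\cM_*\cup\{0\}$: by (\ref{eq:9}) every $cJ_\tau^{1/q}\in\cM_*$ has $\|\cdot\|_*$-norm $|c|\,\|1\|_*$, and a degenerating family of conformal maps $\tau$ converges only weakly to $0$, so no new strong limit points appear. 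Hence the infimum defining $d(u,\cM_*)$ is attained, either at some $\varphi\in\cM_*$ or at $0$; the latter would give $d(u,\cM_*)=\|u\|_*$ and is excluded. Fixing a minimizer $\varphi\in\cM_*$ and using that $\cM_*$ contains the ray $t\mapsto t\varphi$, first-order optimality in $t$ gives $\langle u,\varphi\rangle_*=\|\varphi\|_*^2$, so $d^2(u,\cM_*)=\|u\|_*^2-\|\varphi\|_*^2=\|u\|_*^2-\cS|\varphi|_q^2$, the last equality because $\varphi$ attains equality in (\ref{eq:16}). The Euler--Lagrange equation for $\varphi$ together with Hölder's inequality gives $\langle u,\varphi\rangle_*\le\cS|\varphi|_q\,|u|_q$; comparing with $\langle u,\varphi\rangle_*=\cS|\varphi|_q^2$ yields $|\varphi|_q\le|u|_q$, hence $d^2(u,\cM_*)\ge\|u\|_*^2-\cS|u|_q^2=\Phi(u)$.

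For the lower bound I would use that $\Phi$ and $d^2(\cdot,\cM_*)$ are $2$-homogeneous, invariant under $u\mapsto-u$, and invariant under the $\|\cdot\|_*$-isometries $u\mapsto J_\tau^{1/q}u\circ\tau$ with $\tau$ conformal on $\mathbb{S}^N$ (by (\ref{eq:3_*})), which also preserve $\cM_*$. Hence I may assume $\|u\|_*=1$ and, since $\cM_*$ consists of scalar multiples of the functions $J_\tau^{1/q}$, arrange that the closest point is $\varphi=c_0\cdot 1$ with $c_0>0$. From the identity for $d^2$ above, $c_0^2=(1-\delta^2)/(\lambda_0(s)\,|\mathbb{S}^N|)$ where $\delta:=d(u,\cM_*)\in[0,1)$, so $c_0$ stays in a fixed compact subset of $(0,\infty)$ as $\delta\to0$. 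Writing $w:=u-\varphi$, so that $\|w\|_*=\delta$, first-order optimality makes $w$ orthogonal, with respect to $\langle\cdot,\cdot\rangle_*$, to $T_1\cM_*=\mathrm{span}\{1,\xi_1,\dots,\xi_{N+1}\}$.

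Now comes the core. Since $\Phi\ge0$ vanishes on $\cM_*$, we have $\Phi(\varphi)=0$ and $\Phi'(\varphi)=0$, so $\Phi(u)=\tfrac12\Phi''(\varphi)[w,w]+o(\|w\|_*^2)$. Because $\langle w,1\rangle_*=\lambda_0(s)\int_{\mathbb{S}^N}w$, orthogonality of $w$ to $1$ forces $\int_{\mathbb{S}^N}w=0$, which kills the cross term $\int_{\mathbb{S}^N}\varphi^{q-1}w$ in the second variation of $|u|_q^2$; since $\varphi$ is a constant, the remaining part of that variation is a multiple of $|w|_2^2$, and using $\cS=\lambda_0(s)\,|\mathbb{S}^N|^{(q-2)/q}$ (from $1$ being an optimizer) one gets the clean identity $\tfrac12\Phi''(\varphi)[w,w]=\|w\|_*^2-(q-1)\lambda_0(s)\,|w|_2^2$. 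Orthogonality of $w$ to $T_1\cM_*$ together with (\ref{eq:30}) gives $\|w\|_*^2\ge\lambda_2(s)\,|w|_2^2$, hence $\tfrac12\Phi''(\varphi)[w,w]\ge\bigl(1-(q-1)\lambda_0(s)/\lambda_2(s)\bigr)\|w\|_*^2$. By Proposition~\ref{lemma1}, $\lambda_0(s)/\lambda_2(s)=\frac{(N-s)(N-s+2)}{(N+s)(N+s+2)}$, and $q-1=\frac{N+s}{N-s}$, so $(q-1)\lambda_0(s)/\lambda_2(s)=\frac{N-s+2}{N+s+2}$ and the bracket equals $\frac{2s}{N+s+2}$. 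This gives $\Phi(u)\ge\frac{2s}{N+s+2}\|w\|_*^2+o(\|w\|_*^2)=\frac{2s}{N+s+2}d^2(u,\cM_*)+o(d^2(u,\cM_*))$, the remainder being uniform since $c_0$ ranges over a compact set; undoing the normalization by $2$-homogeneity finishes the argument.

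I expect the main obstacle to be analytic rather than algebraic: once the problem is transported to $\mathbb{S}^N$, the constant $\frac{2s}{N+s+2}$ is essentially forced by the eigenvalue ratio of Proposition~\ref{lemma1}, and the vanishing of the cross term and the reduction to $|w|_2^2$ are elementary because $\varphi$ is a smooth positive constant. The genuine work is to justify the second-order Taylor expansion of $u\mapsto|u|_q^2$ on $H^{\frac{s}{2}}(\mathbb{S}^N)$ around $c_0\cdot 1$ with a remainder that is $o(\|u-\varphi\|_*^2)$ \emph{uniformly} in $c_0$ over a compact subset of $(0,\infty)$ — this uses $q>2$, the embedding $H^{\frac{s}{2}}(\mathbb{S}^N)\hookrightarrow L^q(\mathbb{S}^N)$, and convexity estimates for $t\mapsto|t|^q$ — together with the bookkeeping needed to see that the conformal and scaling reductions, with $c_0$ mildly varying, do not corrupt the error term.
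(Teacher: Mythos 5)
Your proof is correct, and the lower bound (the harder half) runs on the same engine as the paper's: reduce to a minimizer of the form $c_0\cdot 1$ using the conformal isometries \eqref{eq:3_*}, Taylor--expand $\Psi$ to second order, note that orthogonality of $w$ to $T_1\cM_*$ kills the cross term via $\int_{\mathbb{S}^N} w = 0$, and then invoke the spectral gap \eqref{eq:30} between $\lambda_1(s)$ and $\lambda_2(s)$. The observation that the $c_0$'s cancel in $\Phi''(c_0\cdot 1)$ -- a consequence of $2$-homogeneity -- is correct and makes your normalization ($\|u\|_*=1$, $\varphi=c_0\cdot1$) and the paper's ($\varphi=1$, $u$ rescaled) entirely equivalent.

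Where you genuinely diverge is the first inequality $d^2(u,\cM_*)\ge\|u\|_*^2-\cS|u|_q^2$. The paper performs the conformal reduction already here and estimates $|1+v|_q^2\ge|\mathbb{S}^N|^{(2-q)/q}|1+v|_2^2$ by H\"older, then uses $\int v=0$ to expand $|1+v|_2^2$. You instead work with a general minimizer $\varphi\in\cM_*$: first-order optimality along the ray $t\mapsto t\varphi$ gives $\langle u,\varphi\rangle_*=\|\varphi\|_*^2$ and hence $d^2(u,\cM_*)=\|u\|_*^2-\cS|\varphi|_q^2$, and then the Euler--Lagrange equation $A_s\varphi=\cS|\varphi|_q^{2-q}|\varphi|^{q-2}\varphi$ plus H\"older yields $|\varphi|_q\le|u|_q$. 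Your route is arguably cleaner and does not need the conformal reduction for this step; the paper's route is tightly integrated with the conformal reduction it needs anyway for the second inequality and makes the sharpness of the upper bound manifest (equality forces $|v|_2=0$). Both are valid. Your computation of the final constant, $(q-1)\lambda_0(s)/\lambda_2(s)=\frac{N-s+2}{N+s+2}$ and hence the coefficient $\frac{2s}{N+s+2}$, checks out against Proposition~\ref{lemma1}.

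One small remark: you flag the uniformity of the $o(\|w\|_*^2)$ remainder in $c_0$ as the remaining analytic point. In the paper's normalization this concern disappears because the expansion is always at the single function $1$, so the remainder is the ordinary Taylor remainder of a fixed $\mathcal{C}^2$ map; in your normalization the point you raise is real but routine, exactly as you say.
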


\begin{proof}
We consider the functional 
\begin{equation}
  \label{eq:18}
\Psi: H^{\frac{s}{2}}(\mathbb{S}^{N}) \to \R,\qquad \Psi(u)= \|u\|_*^2-
\cS |u|_q^2.  
\end{equation}
It is easy to see that $\Psi$ is of class $\cC^2$ on
$H^{\frac{s}{2}}(\mathbb{S}^{N}) \setminus \{0\}$. Moreover,
\begin{equation}
  \label{eq:19}
\Psi'(u)v = 2 \langle u,v \rangle_*- 2 \cS |u|_q^{2-q} \int_{{\mathbb{S}}^N} |u|^{q-2}u
v\,d\xi  
\end{equation}
and 
\begin{align}
\nonumber 
\frac{1}{2}\Psi''(u)(v,w) = \langle v,w \rangle_*-&\cS (2-q) |u|_q^{2-2q} 
\int_{{\mathbb{S}}^N} |u|^{q-2}u v\,d\xi\, \int_{{\mathbb{S}}^N} |u|^{q-2}u w\,d\xi\\
-&\cS (q-1) |u|_q^{2-q} \int_{{\mathbb{S}}^N} |u|^{q-2}v w\,d\xi
  \label{eq:20}
\end{align}
for $u \in H^{\frac{s}{2}}(\mathbb{S}^{N}) \setminus \{0\}$, $v,w \in
H^{\frac{s}{2}}(\mathbb{S}^{N})$.\\
Next, let $u\in
H^{\frac{s}{2}}(\mathbb{S}^{N})$ with $d(u,\cM_*)<\|u\|_*$. It is easy
to see that  
$d(u, \cM_*)$ is achieved by some
function $c J_{\tau}^{\frac{1}{q}}$ in $\cM_*$ with $c \in \R \setminus \{0\}$
and a conformal transformation $\tau$ on ${\mathbb{S}}^N$. Replacing $u$ with
$\frac{1}{c} J_{\tau^{-1}}^{\frac{1}{q}} u \circ \tau^{-1}$ and using 
(\ref{eq:3_*}), we may assume that $c=1$ and $\tau= \id$, hence
we may write $u=1+v$ with $v \in T_1 \cM_*^\perp$, the normal space of $\cM_*$ at
$1$ defined in (\ref{eq:31}), and $d(u,\cM_*)= \|v\|_*.$ We note that $\Psi(1)=0$ and 
$\Psi'(1)=0$ (since the function $1$ is a global minimizer of $\Psi$). Moreover, the condition $v \in T_1 \cM_*^\perp$ in particular implies -- since $1 \in T_1\cM_*$ -- that 
\begin{equation}
  \label{eq:orthogonal}
\langle 1,v \rangle_*=0 \qquad \text{and}\qquad \int_{{\mathbb{S}}^N} v\,d \xi = 0.  
\end{equation}
In particular, we find that  
\begin{align*}
\Psi(u)&=\Psi(1+v)= \|1\|_*^2+ \|v\|_*^2-\cS |1+v|_q^2 \le  \|1\|_*^2+ \|v\|_*^2-\cS |\mathbb{S}^N|^{\frac{2-q}{q}} |1+v|_2^2\\  
&= \|1\|_*^2+ \|v\|_*^2 -\cS |\mathbb{S}^N|^{\frac{2-q}{q}} (|\mathbb{S}^N|+|v|_2^2)=
\Psi(1)+  \|v\|_*^2 - \cS |\mathbb{S}^N|^{\frac{2-q}{q}}|v|_2^2\\
&\le
\|v\|_*^2= d^2(u,\cM_*), 
\end{align*}
and this yields the first inequality in (\ref{eq:29}). Moreover, from
(\ref{eq:20}) and (\ref{eq:orthogonal}) we infer that   
\begin{equation*}
\frac{1}{2}\Psi''(1)(v,v) = \|v\|_*^2-  (q-1) \cS
|{\mathbb{S}}^N|^{\frac{2-q}{q}} \int_{{\mathbb{S}}^N}v^2\,d\xi.
\end{equation*}
A second order Taylor expansion of $\Psi$ at $1$ thus yields 
\begin{align*}
\Psi(u)= \Psi(1+v)&= \frac{1}{2}\Psi''(1)(v,v) +o(\|v\|_*^2)\\
& =\|v\|_*^2 - (q-1) \cS |{\mathbb{S}}^N|^{\frac{2-q}{q}}|v|_2^2 +o(\|v\|_*^2).
\end{align*}
Using (\ref{eq:21}) and the identity $|{\mathbb{S}}^N|= 2\pi^{\frac{N+1}{2}} \Gamma(\frac{N+1}{2})^{-1}$, 
we find by a short computation (using the duplication formula for the
Gamma function) that 
$$
(q-1) \cS
  |{\mathbb{S}}^N|^{\frac{2-q}{q}}= \frac{N+s}{N-s}\: \cS |{\mathbb{S}}^N|^{-\frac{s}{N}}=
  \frac{\Gamma(\frac{N+s}{2}+1)}{\Gamma(\frac{N-s}{2}+1)} = \lambda_1(s).
$$
Noting moreover that $|v|^2_2 \le \frac{\|v\|_*^2}{\lambda_2(s)} $ as a consequence
of (\ref{eq:30}), we conclude that 
$$
\Psi(u) \ge \|v\|_*^2 \Bigl( 1 -
\frac{\lambda_1(s)}{\lambda_2(s)}+o(1) \Bigr)= d(u,\cM_*)^2 \Bigl(
\frac{2s}{N+s+2} +o(1) \Bigr)
$$
This shows the second inequality in (\ref{eq:29}).
\end{proof}

The next tool we need is the following property of optimizing
sequences for (\ref{eq:1}).

\begin{lemma}
\label{sec:proof-remainder-term-2} 
Let $(u_m)_m \subset \H \setminus \{0\}$ be a sequence with  
$\lim \limits_{m \to \infty} \frac{\|u_m\|_{*}^2}{|u_m|_q^2}= \cS$. Then $\frac{d(u_m,\cM_*)}{\|u_m\|_*} \to 0$ as
$m \to \infty$.
\end{lemma}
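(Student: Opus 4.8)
The plan is to run the classical concentration--compactness argument of Lions for minimizing sequences of the Sobolev quotient, cancelling the loss of compactness with the dilation and translation invariance of the problem. Via the isometric isomorphism $\cP$ of Section~\ref{sec:preliminaries} we pass to $\R^N$, setting $v_m=\cP u_m\in\H$: then $\|v_m\|_{s/2}^2/|v_m|_q^2\to\cS$, and since $\cP$ is an isometry with $\cP(\cM_*)=\cM$ it suffices to show $d(v_m,\cM)/\|v_m\|_{s/2}\to0$. Arguing by contradiction and passing to a subsequence, assume this ratio stays $\ge\delta>0$. Because $\cM$ is a cone and because $d(\,\cdot\,,\cM)$ and $\|\cdot\|_{s/2}$ are invariant under $v\mapsto J_h^{1/q}v\circ h$ (see (\ref{eq:9}) and (\ref{eq:3})), the ratio $d(v_m,\cM)/\|v_m\|_{s/2}$ is unchanged under rescaling $v_m$ by a positive scalar or applying a conformal map to it. We may therefore normalize $|v_m|_q=1$, after which $\|v_m\|_{s/2}^2\to\cS$, the sequence is bounded in $\H$, $\|v_m\|_{s/2}\to\sqrt{\cS}$, and the goal becomes to produce a subsequence with $d(v_m,\cM)\to0$.

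Next I would apply the concentration--compactness principle to the probability densities $\rho_m:=|v_m|^q\,dx$. Using the dilation invariance $v\mapsto\lambda^{(N-s)/2}v(\lambda\,\cdot\,)$ together with translations, I would normalize the concentration scale: choosing $h_m$ a dilation composed with a translation and replacing $v_m$ by $J_{h_m}^{1/q}v_m\circ h_m$, one arranges $Q_m(1)=\tfrac12$ for $Q_m(r):=\sup_{y\in\R^N}\int_{B_r(y)}\rho_m$ (by an intermediate value argument in the dilation parameter). This normalization excludes vanishing and concentration of the full mass at a point. The remaining alternative of Lions' dichotomy is a splitting of the $L^q$-mass into two pieces carried on asymptotically separated regions; this is excluded by strict subadditivity, since by scaling the infimum of $\|v\|_{s/2}^2$ over $\{|v|_q^q=\lambda\}$ equals $\cS\,\lambda^{2/q}$, which is strictly concave because $q=\tfrac{2N}{N-s}>2$, so a split with masses $\alpha,1-\alpha$, $\alpha\in(0,1)$, would force $\cS=\lim\|v_m\|_{s/2}^2\ge\cS\bigl(\alpha^{2/q}+(1-\alpha)^{2/q}\bigr)>\cS$. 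Hence the compactness alternative holds, and (after one more translation and using tightness together with local strong $L^q$-convergence of the weakly convergent subsequence) we get $v_m\to v$ strongly in $L^q(\R^N)$ with $|v|_q=1$.

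To conclude I would identify $v$ and upgrade to strong $\H$-convergence. Weak lower semicontinuity of $\|\cdot\|_{s/2}$ and the Sobolev inequality (\ref{eq:1}) for $v$ give $\cS=\cS|v|_q^2\le\|v\|_{s/2}^2\le\liminf_m\|v_m\|_{s/2}^2=\cS$, so $\|v\|_{s/2}^2=\cS$, i.e.\ $v$ is an extremal and $v\in\cM$. Then $\|v_m\|_{s/2}\to\sqrt{\cS}=\|v\|_{s/2}$ together with $v_m\rightharpoonup v$ in the Hilbert space $\H$ forces $v_m\to v$ strongly. Thus $d(v_m,\cM)\le\|v_m-v\|_{s/2}\to0$ along this subsequence; since the ratio $d(v_m,\cM)/\|v_m\|_{s/2}$ was preserved throughout all normalizations and $\|v_m\|_{s/2}\to\sqrt{\cS}>0$, this contradicts the assumption that it stays $\ge\delta$, completing the proof.

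The step I expect to be the main obstacle is the rigorous implementation of the dichotomy alternative for the \emph{nonlocal} norm $\|\cdot\|_{s/2}$: unlike the case of even integer $s$ treated in \cite{BL,GG,BWW}, where the energy is a local integral of derivatives and can be split by a smooth cutoff, for general $s\in(0,N)$ one needs an ``almost-orthogonality'' statement asserting that the energies of two pieces supported on asymptotically separated regions add up to leading order. One way to obtain it is to pass to the (possibly higher-order) Caffarelli--Silvestre extension, which renders the energy local; another is to quote a profile decomposition for the embedding $\H\hookrightarrow L^q(\R^N)$ from the literature, or to argue through Lieb's \cite{L} dual Hardy--Littlewood--Sobolev formulation (\ref{eq:4}), in which $(-\Delta)^{-s/2}$ is a positive integral operator and the subadditivity of $\cS$ is directly visible.
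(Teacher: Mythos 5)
Your overall plan is the right one, but it stops short of a complete proof at precisely the place you flag. The strategy — pass to $\R^N$ via $\cP$, normalize, run concentration--compactness for the Sobolev quotient, exclude vanishing/dichotomy, deduce strong $L^q$ convergence to an extremal, upgrade to strong $\H$ convergence by norm convergence plus weak convergence in a Hilbert space, and conclude $d(v_m,\cM)\to 0$ — is sound in outline and is the same conceptual move the paper makes. The genuine gap is the exclusion of dichotomy for the nonlocal energy $\|\cdot\|_{s/2}$: to run Lions' trichotomy you must show that when the $L^q$ mass splits into two asymptotically separated pieces, the energy is bounded below by the sum of the energies of the pieces, up to $o(1)$. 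For a local $\int|\nabla^k u|^2$ energy this follows from a smooth cutoff, but for $(-\Delta)^{s/2}$ with noninteger $s$ the quadratic form does not localize, and no such cutoff lemma is available off the shelf. You name this obstacle yourself and offer three possible remedies (extension, profile decomposition, dual HLS), but none is carried out, so the argument as written does not close.

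The paper sidesteps the issue entirely: it normalizes $\|u_m\|_*=1$ and applies G\'erard's profile decomposition theorem for the embedding $\H\hookrightarrow L^q(\R^N)$ \cite{gerard:98}. That theorem already incorporates, for general $s\in(0,N)$, the almost-orthogonality of energies you would otherwise have to prove by hand. From \eqref{eq:25}--\eqref{eq:26}, the Sobolev inequality and the strict concavity of $t\mapsto t^{2/q}$ force all but one profile to vanish, and the surviving profile is a normalized extremal; rescaling $v_m$ by the corresponding $(h_m^1,x_m^1)$ then gives strong $L^q$ convergence and, since $\|\tilde v_m\|_{s/2}=\|\psi_1\|_{s/2}$, strong $\H$ convergence. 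Note also that the paper's Remark after the lemma records exactly your observation: the Lions route \cite{Lions1} applies when $s$ is an even integer, while for general $s$ one would instead have to pass to the dual HLS formulation and use \cite{Lions2}. In short, choose one of your three patches and actually execute it; the cleanest is to invoke G\'erard's theorem, which is what the paper does, and then your proof coincides with theirs up to the normalization ($|v_m|_q=1$ vs.\ $\|u_m\|_*=1$, which is immaterial).
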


\begin{proof}
By homogeneity, we may assume that $\|u_m\|_*=1$ for all $m \in
\N$, and we need to show that $d(u_m,\cM_*) \to 0$ as $m \to \infty$. We let $v_m= \cP
u_m \in \H$ for $m \in \N$; then $\|v_m\|_{s/2}=1$ for all $m$, and  
\begin{equation}
  \label{eq:23}
\frac{1}{|v_m|_q^2} \to \cS
\qquad \text{as $m \rightarrow \infty$}.
\end{equation}
By the profile decomposition
theorem of G\'erard (see \cite[Th\'eor\`eme 1.1 and Remarque
1.2]{gerard:98}) , there exists a subsequence -- still denoted by
$(v_m)_m$ -- and\\ 
$\bullet$ a sequence $(\psi_j)_j$ of functions $\psi_j \in \H$,\\
$\bullet$ an increasing sequence of numbers $l_m \in \N$, $m \in \N$,\\
$\bullet$ a double sequence of values $h_m^j \in (0,\infty)$, $m,j \in \N$,\\
$\bullet$ a double sequence of points $x_m^j \in \R^N$, $m,j \in \N$\\
such that 
\begin{align}
&\Bigl|v_m- \sum_{j=1}^{l_m} \bigl(h_m^j\bigr)^{-\frac{s}{2q}}\,
\psi_j \bigl(\frac{\cdot - x_m^j}{h_m^j}\bigr)\Bigr|_q \to 0 \qquad
\text{as $m \to \infty$}, \label{eq:25}\\ 
& |v_m|_{q}^q \to \sum_{j=1}^\infty |\psi_j|_{q}^q
\quad \text{as $m \to
  \infty$} \qquad \text{and} \qquad \sum_{j=1}^\infty
\|\psi_j\|_{s/2}^2 \:\le\: 1.
 \label{eq:26}
\end{align}
Combining the Sobolev inequality~(\ref{eq:1}) with
(\ref{eq:26}) and using the
concavity of the function $t \mapsto t^{2/q}$, we find that 
\begin{equation}
  \label{eq:28}
1 \ge \cS \sum_{j=1}^\infty 
|\psi_j|_{q}^2 \ge \cS \Bigl(\sum_{j=1}^\infty 
|\psi_j|_{q}^q\Bigr)^{2/q}= \cS \lim_{m \to \infty} |v_m|_{q}^2.   
\end{equation}
By (\ref{eq:23}), equality holds in all steps in (\ref{eq:28}). The strict
concavity of the function $t \mapsto t^{2/q}$ then shows that $\psi_j
\equiv 0$ for all but one $j \in \N$, say, $j=1$, where 
$\cS |\psi_1|_{q}^2 = 1$ and $\|\psi_1\|_{s/2}=1$
as a consequence of (\ref{eq:26}),~(\ref{eq:28}) and the Sobolev
inequality (\ref{eq:1}). Hence $\Psi_1 \in \cM$, and from
(\ref{eq:25}) it now follows that 
$$
\Bigl|v_m- \bigl(h_m^1\bigr)^{-\frac{s}{2q}}\,
\psi_1 \bigl(\frac{\cdot - x_m^1}{h_m^1}\bigr)\Bigr|_q \to 0 \qquad
\text{as $m \to \infty$.} 
$$
Therefore, defining 
$$ 
\tilde v_m \in \H,\quad \tilde v_m(x)= \bigl(h_m^1\bigr)^{\frac{s}{2q}}
v_m(h_m^1 x + x_m^1) \qquad \text{for $m \in \N$,}
$$
we have $\tilde v_m  \to  \psi_1$ in $L^q(\R^N)$ for $m \to \infty$,
but then also $\tilde v_m \to \psi_1$ in $\H$ strongly since $\|\tilde
v_m\|_{s/2}=\|v_m\|_{s/2} =1=\|\psi_1\|_{s/2}$ for all $m \in \N$. Consequently, $d(\tilde v_m,\cM) \to 0$. By the invariance property 
(\ref{eq:9}), we then have $d(v_m,\cM) \to 0$ and therefore also 
$d(u_m,\cM_*) \to 0$ as $m \to \infty$, since $\cP$ is an isometry.  
\end{proof}

\begin{remark}{\rm 
(i) We note that we do not need the full strength of G\'erard's
profile decomposition theorem. Inductively, G\'erard writes $v_m$ as an
infinite sum of bubbles, see (\ref{eq:25}) and \cite{gerard:98}. For our proof it is enough
to stop this procedure after the very first step. As soon as one
bubble is extracted, the strict concavity of the function $t\mapsto t^{2/q}$ implies
the convergence.\\
(ii) In the case where $s \in (0,N)$ is an even integer,
one could also use a classical concentration compactness result
of Lions instead of G\'erard's result, see \cite[Corollary 1]{Lions1}.\\     
(iii) For arbitrary $s\in (0,N)$, one could also use the duality
between (\ref{eq:1}) and (\ref{eq:4}) explained in the introduction
and another concentration compactness result of Lions about optimizing
sequences for (\ref{eq:4}), see \cite[Theorem 2.1]{Lions2}. To us it seemed more
natural to use a technique directly applicable to optimizing sequences for (\ref{eq:1}).}
\end{remark}

With the help of Proposition~\ref{sec:proof-remainder-term-1} and Lemma~\ref{sec:proof-remainder-term-2}, we may
now complete the 

\begin{proof}[Proof of Theorem~\ref{maintheorem-reform}] 
Let $u \in
H^{\frac{s}{2}}(\mathbb{S}^{N})$. Since $0 \in \overline {\cM_*}$, we
have $d(u,\cM_*) \le \|u\|_*$. If $d(u,\cM_*) < \|u\|_*$,
then the first inequality in (\ref{eq:17}) follows from
Proposition~\ref{sec:proof-remainder-term-1}, and it is trivially
satisfied if $d(u,\cM_*)= \|u\|_*$. To prove the second inequality in
(\ref{eq:17}) for some $\alpha>0$, we argue by contradiction.  For this we assume that
there exists a sequence $(u_{m})_m$ in
$H^{\frac{s}{2}}(\mathbb{S}^{N})\setminus \overline{\cM_*} $ with
\begin{equation}
  \label{eq:24}
\frac{\|u_m\|_*^2-\cS |u_m|_q^2}{d^{2}(u_{m}, \cM_*)}\rightarrow 0
\quad \text{as $m\rightarrow \infty$.}
\end{equation}
By homogeneity we can assume that $\|u_m\|_*=1$ for all $m \in \N$,
then $d(u_m,\cM_*) \le 1$ for all $m \in \N$ and therefore
(\ref{eq:24}) implies that $\lim \limits_{m \to \infty}|u_m|_q^2
=\frac{1}{\cS}$. Hence Lemma~\ref{sec:proof-remainder-term-2} gives 
$d(u_m,\cM) \to 0$ as $m \to \infty$. But then
Proposition~\ref{sec:proof-remainder-term-1} shows that (\ref{eq:24})
must be false. We conclude that there exists $\alpha>0$ such that 
$$
\|u\|_*^2-\cS |u|_q^2 \ge \alpha\, d^{2}(u,\cM_*) \qquad \text{for all
$u \in H^{\frac{s}{2}}(\mathbb{S}^{N})$,}
$$
as claimed.
\end{proof}

\section{The weak $L^{q/2}$ remainder term inequality for domains of
  finite measure}
\label{sec:weak-lq2-remainder}
In this section we give the proof of Theorem~\ref{maincorollary}. For
this we define 
$$
U_{\lambda,y} \in \H,\qquad U_{\lambda,y}(x):=\lambda U(\lambda^{\frac{2}{N-s}}(x-y))
$$\
for  $c \in \R \setminus \{0\}, \lambda>0$ and $y \in
\R^N$, so that 
$$
\cM= \{c U_{\lambda,y}\::\: c \in \R \setminus \{0\}, \lambda>0, y \in
\R^N \}.
$$
It will be convenient to adjust the notation for the
weak $L^{q/2}$-norm. We fix $q=\frac{2N}{N-s}$ from now on, and we
write 
$$
|u|_{w,\Omega}= \sup_{\stackrel{A \subset \Omega}{|A|>0}}|A|^{-\frac{s}{N}} \int_A |u|\,dx.
$$
for the weak $L^{q/2}$-norm of a measurable function $u$ defined on a
measurable set $\Omega \subset \R^N$. We note the following scaling property, which follows by direct computation: 
\begin{equation}
  \label{eq:33}
|U_{\lambda,y}|_{w,\R^N}=  |U_{\lambda,0}|_{w,\R^N} = \frac{|U|_{w,\R^N}}{\lambda}
 \qquad \text{for $\lambda>0$, $y \in \R^N$.}
\end{equation}
Similarly, for a fixed domain $\Omega \subset \R^N$, $u \in \HO$ and
$\lambda>0$, define   
$$
\Omega_\lambda:= \lambda^{-2/(N-s)}\Omega \,\subset\, \R^N \qquad
\text{and}\qquad u_\lambda \in
\mathring{H}^{\frac{s}{2}}(\Omega_\lambda),\quad u_\lambda(x)= \lambda u(\lambda^{\frac{2}{N-s}}x).
$$ 
Then a direct computation shows 
\begin{equation}
  \label{eq:37}
|\Omega_\lambda|= \lambda^{-q}|\Omega|, \quad
|u_\lambda|_{w,\Omega_\lambda}= \frac{|u|_{w,\Omega}}{\lambda} \quad
\text{and}\quad d(u_\lambda,\cM)=d(u,\cM).
\end{equation}

Theorem~\ref{maincorollary} will follow immediately from the
following Proposition.

\begin{proposition}
\label{sec:weak-lq2-remainder-1}
There exists a constant $C_0$ depending only on $N$ and $s \in (0,N)$ such that
\begin{equation}
  \label{eq:36}
|u|_{w,\Omega}  \le C_0 |\Omega|^{\frac{1}{q}}\, d(u,\cM)   
\end{equation}
for all subdomains $\Omega \subset \R^N$ with $|\Omega|< \infty$
  and all $u \in \HO$.
\end{proposition}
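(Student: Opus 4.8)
The plan is to reduce Proposition~\ref{sec:weak-lq2-remainder-1} to a statement about a single fixed function and then exploit the scaling invariances recorded in (\ref{eq:33}) and (\ref{eq:37}). First I would split the left-hand side using the triangle inequality: for any $\vp \in \cM$ achieving (or nearly achieving) the distance $d(u,\cM)$, write
\begin{equation*}
|u|_{w,\Omega} \le |u-\vp|_{w,\Omega} + |\vp|_{w,\Omega}.
\end{equation*}
For the first term, the weak $L^{q/2}$-norm on a set of measure $|\Omega|$ is controlled by the strong $L^{q/2}$-norm, which in turn is controlled by $|u-\vp|_q^2$ via H\"older on $\Omega$; and $|u-\vp|_q \le \cS^{-1/2}\|u-\vp\|_{s/2} = \cS^{-1/2} d(u,\cM)$ by the Sobolev inequality~(\ref{eq:1}). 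This already gives a bound of the desired shape $C|\Omega|^{1/q} d(u,\cM)$ for the first term, with the factor $|\Omega|^{1/q}$ coming out of H\"older because $q/2 > 1$.

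The real issue is the second term $|\vp|_{w,\Omega}$, since a generic optimizer $\vp = c\,U_{\lambda,y}$ can have huge weak norm when $\lambda$ is small (the bubble is spread out) — but in that regime $\Omega$ need not be large, so it is not obvious that $|\vp|_{w,\Omega} \le C|\Omega|^{1/q} d(u,\cM)$. The key point is that $\vp$ itself is close to $u$, which is supported in $\Omega$: so most of the mass of $\vp$ lives near $\Omega$. Concretely I would argue as follows. By the scaling relations (\ref{eq:37}) and (\ref{eq:33}) — note $|\Omega_\lambda| = \lambda^{-q}|\Omega|$, $|u_\lambda|_{w,\Omega_\lambda} = \lambda^{-1}|u|_{w,\Omega}$, $d(u_\lambda,\cM) = d(u,\cM)$, and $U_{\lambda,y}$ rescales to $U$ — the inequality (\ref{eq:36}) is invariant under the dilation $u \mapsto u_\lambda$. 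Hence, after rescaling, we may normalize so that the optimizer has $\lambda = 1$, i.e. $\vp = c\,U(\cdot - y)$ for some $c \in \R$, $y \in \R^N$; translation invariance of everything lets us further take $y = 0$, so $\vp = cU$. Then $|\vp|_{w,\Omega} = |c|\,|U|_{w,\Omega} \le |c|\,|U|_{w,\R^N}$, and it remains to bound $|c|$ and $|\Omega|$ against $d(u,\cM)$.

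To close the argument I would distinguish two cases according to the size of $|\Omega|$. If $|\Omega|$ is bounded below by a fixed constant (depending only on $N,s$), then $|\Omega|^{1/q} \ge$ const, and I bound $|c|$ as follows: $\|u - cU\|_{s/2} = d(u,\cM)$ forces $|c|\,\|U\|_{s/2} \le \|u\|_{s/2} + d(u,\cM)$, but also $u$ is supported in $\Omega$ while $cU$ has a definite fraction of its $L^q$-mass outside any set of bounded measure, so $|u - cU|_q$ — hence $d(u,\cM)$ — is bounded below by a constant times $|c|$; this gives $|\vp|_{w,\Omega} \le C d(u,\cM) \le C|\Omega|^{1/q} d(u,\cM)$. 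If instead $|\Omega|$ is small, then after the normalization above one checks that $\|u\|_{s/2}$ cannot be too small relative to $d(u,\cM)$ unless... — more cleanly, I would simply estimate $|\vp|_{w,\Omega}$ directly by splitting the supremum defining the weak norm over sets $A \subset \Omega$: since $|A| \le |\Omega|$ is small, $\int_A |U| \le |U|_q |A|^{1-2/q}$ is small, so $|U|_{w,\Omega} \le C|\Omega|^{\text{(positive power)}}$, and one absorbs the loss into $|\Omega|^{1/q}$ provided one also controls $|c|$ — again via the fact that $\|u - cU\|_{s/2}$ small and $u$ supported in a small-measure set force $|c|$ small (by the same Sobolev/H\"older estimate, $|c|\cdot\text{(mass of $U$ outside $\Omega$)} \lesssim d(u,\cM)$). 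The main obstacle, and where care is genuinely needed, is making the constant uniform in $\Omega$: one must quantify ``a fixed fraction of the $L^q$-mass of $U$ lies outside any set of measure $m$'' with an explicit lower bound depending only on $m$ (and $N,s$), which follows from the fact that $U^q$ is integrable and strictly positive, so $\int_{U^q > \delta}$ exhausts the mass as $\delta \to 0$ while $|\{U^q > \delta\}| < \infty$; combining this with the rescaling reduction is what delivers a constant $C_0 = C_0(N,s)$ independent of $\Omega$ and $u$.
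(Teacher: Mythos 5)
Your overall strategy is sound and essentially the same as the paper's: control the weak norm of $u-\vp$ by $|\Omega|^{1/q}|u-\vp|_q \le |\Omega|^{1/q}\cS^{-1/2}d(u,\cM)$ via H\"older and Sobolev, then bound the weak norm of the nearest optimizer $\vp = cU_{\lambda,y}$ by using the fact that $u$ vanishes off $\Omega$, which forces the bubble not to leak too much of its $L^q$-mass outside $\Omega$. The paper and you make opposite normalization choices via the scaling in (\ref{eq:37}): the paper sets $|\Omega|=1$ and lets $\lambda$ float (then shows $\lambda\gtrsim 1/d(u,\cM)$), while you set $\lambda=1$ and let $|\Omega|$ float. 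Both are workable, but two points in your execution need repair.

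First, your case analysis breaks down for $|\Omega|$ large. In the case ``$|\Omega|$ bounded below'' you argue that $cU$ has ``a definite fraction of its $L^q$-mass outside any set of bounded measure,'' concluding $|c|\lesssim d(u,\cM)$ and hence $|\vp|_{w,\Omega}\lesssim d(u,\cM)\lesssim |\Omega|^{1/q}d(u,\cM)$. But ``$|\Omega|$ bounded below'' does not cap $|\Omega|$, and as $|\Omega|\to\infty$ the quantity $|U|_{L^q(\R^N\setminus\Omega)}$ tends to $0$, so the lower bound $d(u,\cM)\ge\sqrt{\cS}\,|c|\,|U|_{L^q(\R^N\setminus\Omega)}$ gives no $|\Omega|$-independent bound on $|c|$ at all. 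What saves the argument is that the decay is \emph{exactly} the right rate: by the bathtub principle and the explicit tail $U(x)\sim|x|^{-(N-s)}$ one has
\begin{equation*}
|U|_{L^q(\R^N\setminus\Omega)}^q \;\ge\; |U|_{L^q(\R^N\setminus B_\Omega)}^q \;\gtrsim\; |\Omega|^{-1}\quad\text{for } |\Omega|\gtrsim 1,
\end{equation*}
so that $|c|\lesssim|\Omega|^{1/q}d(u,\cM)$, which combined with $|U|_{w,\Omega}\le|U|_{w,\R^N}$ still gives the claim. In your final paragraph you acknowledge the need to quantify the mass outside a set of measure $m$, but you describe it as ``a fixed fraction'' following from mere integrability of $U^q$; integrability alone only says the lower bound tends to $0$ as $m\to\infty$ without giving the rate $\gtrsim m^{-1}$ that is actually needed to balance $|\Omega|^{1/q}$. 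This precise rate (the analogue of the paper's (\ref{eq:27})) is the crux and must be computed, not merely asserted.

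Second, you take $\vp$ achieving (or nearly achieving) $d(u,\cM)$ and then rescale so that its parameter $\lambda$ equals $1$. When $d(u,\cM)$ is comparable to $\|u\|_{s/2}$, the infimum need not be attained (minimizing sequences can go to $0\in\overline{\cM}\setminus\cM$), and for near-minimizers the rescaling factor $1/\lambda$ need not be controlled, so this normalization is not available. The paper sidesteps this by treating the regime $d(u,\cM)\ge\rho\|u\|_{s/2}$ separately with the crude Sobolev bound, and only in the complementary regime (after normalizing $\|u\|_{s/2}=1$, so $d(u,\cM)<\rho<1$) invokes that the distance is attained. You should add an analogous dichotomy before invoking the existence of a minimizer $\vp$.
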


\begin{proof}
By the scaling properties noted in (\ref{eq:37}), it suffices to
consider a subdomain $\Omega \subset \R^N$ with $|\Omega|=1$ in the sequel. In this case we have, by H\"older's inequality and (\ref{eq:1}), 
\begin{equation}
|u|_{w,\Omega} \le \|u\|_{L^q(\Omega)} 
\le \|u\|_{L^q(\R^N)}   
\le \frac{1}{\sqrt{\cS}}  \|u\|_{s/2}\quad \text{for every $u \in \H$.}      \label{eq:34}
\end{equation}
In the following, let $\rho \in (0,1)$ be given by  
\begin{equation}
  \label{eq:39}
\frac{\rho}{\sqrt{\cS}
  (1-\rho)}= \Bigl(|\mathbb{S}^{N-1}| \int_{1}^\infty
\frac{r^{N-1}}{(1+r^2)^{N}}\,dr\Bigr)^{\frac{1}{q}}
\end{equation}
Let $u \in \HO$. If $\rho \|u\|_{s/2} \leq d(u,\cM)$, then 
\begin{equation}
  \label{eq:38}
|u|_{w,\Omega} \le \frac{1}{\rho \sqrt{\cS}} d(u,\cM)  
\end{equation}
as a consequence of (\ref{eq:34}). So in the remainder of this proof we assume that 
\begin{equation}
 \label{eq:case}
\rho \|u\|_{s/2} > d(u,\cM) \,.
\end{equation}
By homogeneity we may assume that $\|u\|_{s/2}=1$. Since $\rho<1$, the infimum in the definition of $d(u,\cM)$ is attained as a consequence of
(\ref{eq:case}), and we have $d(u,\cM)=\| u - c U_{\lambda,y}\|_{s/2}$ for some $c\in\R$, $\lambda>0$ and $y\in\R^n$. Moreover, \eqref{eq:case} implies that
$$
| 1 - c | = \left| \|u\|_{s/2} - \| c U_{\lambda,y} \|_{s/2} \right| \leq d(u,\cM) \leq \rho \,,
$$
that is, $1-\rho\leq c\leq 1+\rho$. We note that 
\begin{align*}
d(u,\cM)^2 & =\|u-c U_{\lambda,y}\|_{s/2}^2 \ge \cS \|u-c U_{\lambda,y}\|_{L^q(\R^N)}^2\\
&\ge \cS |c|^2 \|U_{\lambda,y}\|_{L^q(\R^N\setminus \Omega)}^2 \ge \cS (1-\rho)^2 \|U_{\lambda,y}\|_{L^q(\R^N\setminus \Omega)}^2 \,.
\end{align*}
Now let $B\subset\R^N$ denote the open ball centered at zero with
$|B|=1$, and let $r_0>0$ denote the radius of $B$. Since the function $U$ in (\ref{eq:6}) is radial
and strictly decreasing in the radial variable, the bathtub principle \cite[Theorem 1.14]{LL} implies that
$$
\|U_{\lambda,y}\|_{L^q(\R^N\setminus \Omega)}^2 \ge
\|U_{\lambda,y}\|_{L^q(\R^N \setminus (B+y))}^2=  \|U_{\lambda,0}\|_{L^q(\R^N \setminus B)}^2 \,,
$$
and hence 
\begin{equation}
\|U_{\lambda,0}\|_{L^q(\R^N \setminus B)}^q \le \Bigl(\frac{d(u,\cM)}{\sqrt{\cS}
  (1-\rho)}\Bigr)^q  \le \Bigl(\frac{\rho}{\sqrt{\cS}
  (1-\rho)}\Bigr)^q= |\mathbb{S}^{N-1}| \int_{1}^\infty
\frac{r^{N-1}}{(1+r^2)^{N}}\,dr   \label{eq:22}
\end{equation}
by our choice of $\rho$ in (\ref{eq:39}). On the other hand, we
compute 
$$
\|U_{\lambda,0}\|_{L^q(\R^N \setminus B)}^q = |\mathbb{S}^{N-1}|
\int_{r_0}^\infty
\frac{r^{N-1} \lambda^{q}}{\big[1+(\lambda^{\frac{2}{N-s}}r)^2\big]^N}\,dr = |\mathbb{S}^{N-1}| \int_{\lambda^{\frac{2}{N-s}}r_0}^\infty
\frac{r^{N-1}}{(1+r^2)^{N}}\,dr
$$
This implies that $\lambda^{\frac{2}{N-s}}r_0\geq 1$ and therefore 
\begin{align}
\|U_{\lambda,0}\|_{L^q(\R^N \setminus B)}^q & = |\mathbb{S}^{N-1}| \int_{\lambda^{\frac{2}{N-s}}r_0}^\infty
\frac{r^{N-1}}{(1+r^2)^{N}}\,dr \label{eq:27} \\
& \ge 2^{-N} |\mathbb{S}^{N-1}|
\int_{\lambda^{\frac{2}{N-s}}r_0}^\infty\,
\frac{dr}{r^{N+1}}=\frac{|\mathbb{S}^{N-1}|}{N\, (2r_0)^N}\,\lambda^{-q}.
\nonumber
\end{align}
Combining (\ref{eq:22}) and (\ref{eq:27}), we conclude that 
\begin{equation}
  \label{eq:35}
d(u,\cM) \ge \frac{C_1}{\lambda} \qquad \text{with $C_1:= \sqrt{\cS}(1-\rho)\Bigl(\frac{|\mathbb{S}^{N-1}|}{N\, (2r_0)^N}\Bigl)^{\frac{1}{q}}$.} 
\end{equation}
Using (\ref{eq:33}), (\ref{eq:34}) and (\ref{eq:35}), we find that 
\begin{align*}
|u|_{w,\Omega}& \le |c U_{\lambda,y}|_{w,\Omega} + |u- c U_{\lambda,y}|_{w,\Omega}\le  (1+\rho) |U_{\lambda,y}|_{w,\R^N} + \frac{1}{\sqrt{\cS}}
\|u- c U_{\lambda,y}\|_{s/2}\\
&= \frac{1+\rho}{\lambda}
|U|_{w,\R^N} + \frac{1}{\sqrt{\cS}} d(u,\cM) \le C_2 d(u,\cM)
\end{align*}
with $C_2:= \frac{(1+\rho)}{C_1}|U|_{w,\R^N} +
\frac{1}{\sqrt{\cS}}$. Combining this with (\ref{eq:38}), we thus obtain the claim with $C_0:= \max \{C_2,\frac{1}{\rho \sqrt{\cS}}\}$. 
\end{proof}

Finally, Theorem~\ref{maincorollary} now simply follows by combining
Theorem~\ref{maintheorem} and
Proposition~\ref{sec:weak-lq2-remainder-1} and setting $C:=C_0^{-2}$. 

{\em \bf Acknowledgement.} U.S. National Science Foundation grant
PHY-1068285 (R.F.) and German Science Foundation (DFG) grant WE
2821/4-1 (T.W.) is acknowledged. Shibing Chen wants to thank Robert McCann for
helpful discussions.

\end{document}